\title{Bounded Depth Ascending HNN Extensions and $\pi_1$-Semistability at $\infty$}
\author{Michael Mihalik }
\newtheorem{theorem}{Theorem}[section]
\newtheorem{proposition}[theorem]{Proposition}
\newtheorem{lemma}[theorem]{Lemma}
\newcounter{remarknum}
\newenvironment{remark}{\addvspace{12pt}\refstepcounter{remarknum}
\noindent{\bf Remark \arabic{remarknum}.}}{\par\addvspace{12pt}}
\newenvironment{proof}{\addvspace{12pt}\noindent{\bf Proof:}}{
$\Box$\par\addvspace{12pt}}
\newcounter{examplenum}
\newenvironment{example}{\addvspace{12pt}\refstepcounter{examplenum}
\noindent{\bf Example \arabic{examplenum}.}}{\par\addvspace{12pt}}
\begin{document}
\maketitle
\begin{abstract}
A 1-ended finitely presented group has semistable fundamental group at $\infty$ if it acts geometrically on some (equivalently any) simply connected and locally finite complex $X$ with the property that any two proper rays in $X$ are properly homotopic. If $G$ has semistable fundamental group at $\infty$ then one can unambiguously define the fundamental group at $\infty$ for $G$. The problem, asking if all finitely presented groups have semistable fundamental group at $\infty$ has been studied for over 40 years. If $G$ is an ascending HNN extension of a finitely presented group then indeed, $G$ has semistable fundamental group at $\infty$, but since the early 1980's it has been suggested that the finitely presented groups that are ascending HNN extensions of {\it finitely generated} groups may include a group with non-semistable fundamental group at $\infty$. Ascending HNN extensions naturally break into two classes, those with bounded depth and those with unbounded depth. Our main theorem shows that bounded depth finitely presented ascending HNN extensions of finitely generated groups have semistable fundamental group at $\infty$. Semistability is equivalent to two weaker asymptotic conditions on the group holding simultaneously. We show one of these conditions holds for all ascending HNN extensions, regardless of depth.  We give a technique for constructing ascending HNN extensions with unbounded depth. This work focuses attention on a class of groups that may contain a group with non-semistable fundamental group at $\infty$. 
\end{abstract}

\section {Introduction}\label{Intro}

If $H$ is a group, and $\phi:H\to H$ is a monomorphism, then the notation $\langle t,H:t^{-1}ht=\phi(h)\rangle$ stands for a presentation of a group $G$ with generators $\{t\}\cup H$  and relation set $\{t^{-1} ht=\phi( h)\hbox{ for all } h\in  H\}$ union all relations for $H$. The group $G$ is usually denoted $H\ast_{\phi}$ and called an {\it ascending HNN extension} with {\it base} $H$ and {\it stable letter} $t$. By Britton's lemma the obvious map of $H$ into $G$ is an isomorphism onto its image. 
If $F(\mathcal A)$ is the free group on the set $\mathcal A$,  $\phi:\mathcal A\to F(\mathcal A)$ is a function and $\mathcal R$ is a set of $\mathcal A$-words, then the group $G$ with presentation 
$$\mathcal P=\langle t,\mathcal A:\mathcal R,t^{-1}at=\phi(a) \hbox { for all } a\in \mathcal A\rangle$$
is an ascending HNN extension of $A$,  the subgroup of $G$ generated by $\mathcal A$. It is important to note that $\langle \mathcal A:\mathcal R\rangle$ need not be a presentation for $A$. For each integer $n>0$ and $r\in \mathcal R$, $\phi^n(r)$ may not be in the normal closure of $\mathcal R$ in $F(\mathcal A)$, but certainly $\phi^n(r)$ is a relator of $A$. In fact, when $\mathcal A$ is finite,  one would rarely expect $A$ to be finitely presented. The relations  $t^{-1}at=\phi(a)$ are called {\it conjugation relations}. 

Semistability of the fundamental group at $\infty$ for a finitely presented group is a geometric notion defined in $\S$\ref{ss}. If a finitely presented 1-ended group $G$ has semistable fundamental group at $\infty$ then the fundamental group at $\infty$ of $G$ is independent of base ray. It is unknown if all finitely presented groups are semistable at $\infty$.
To date, the strongest result in the theory of semistability and simple connectivity at  $\infty$ for ascending HNN extensions is the following:

\begin{theorem} {\bf (M. Mihalik \cite{HNN1})} \label{MM}
 Suppose $H$ is a finitely presented group $\phi:H\to H$ is a monomorphism and $G=\langle t,H:t^{-1}ht=\phi(h)\rangle$ is the resulting HNN extension. Then $G$ is 1-ended and semistable at $\infty$. If additionally, $H$ is 1-ended, then $G$ is simply connected at $\infty$. 
\end{theorem}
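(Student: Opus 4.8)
The plan is to build an explicit simply connected, locally finite complex on which $G$ acts geometrically, and then verify semistability by a direct properly-homotoping-of-rays argument. The natural candidate is a tree-like structure: since $G = H \ast_\phi$ is an ascending HNN extension, the Bass--Serre tree $T$ for this splitting is a line-like object in the sense that $t$-conjugation pushes the vertex groups inward. Concretely, I would take the presentation 2-complex $Y$ for $\langle t,H : t^{-1}ht = \phi(h)\rangle$, pass to its universal cover $\widetilde Y$, and organize $\widetilde Y$ over the Bass--Serre tree $T$. Because the edge map $H \hookrightarrow H$ given by $\phi$ is injective but typically not surjective, each vertex of $T$ has one outgoing $t$-edge and possibly many incoming edges; this gives $T$ the structure of a ``tree flowing toward a single end''. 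The key geometric input is that $H$ is finitely presented, so each vertex space is a copy of the (Cayley 2-complex of the) finitely presented group $H$, hence $1$-ended or finite-ended in a controlled way, and the conjugation relations provide the ``horizontal'' connective tissue between adjacent sheets.

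The main steps I would carry out, in order, are as follows. First, establish that $G$ is $1$-ended: since $H$ maps injectively and the quotient is an ascending HNN extension (not a proper free product or finite extension), one shows $G$ is infinite and cannot be split as having more than one end, typically by exhibiting that the complement of any large ball stays connected through the $t$-flow. Second, reduce semistability to the statement that any two proper rays in $\widetilde Y$ based at a common point are properly homotopic, and further reduce, via standard techniques, to comparing rays that track the $t$-direction in $T$. Third, carry out the core proper-homotopy construction: given a proper loop far out in $\widetilde Y$, I would push it toward the preferred end of $T$ using the $t$-action, and at each stage use finite presentation of $H$ to fill in annular regions with uniformly bounded complexity, so that the homotopies remain proper. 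Fourth, for the simple-connectivity-at-$\infty$ strengthening when $H$ is $1$-ended, I would show additionally that loops far out bound disks far out, again using the $1$-endedness of each vertex space $H$ to close up the horizontal loops and the $t$-flow to close up the vertical ones.

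The hard part, I expect, will be controlling properness of the homotopies uniformly across infinitely many sheets. Each application of a conjugation relation $t^{-1}ht = \phi(h)$ replaces an element $h$ of $H$ by $\phi(h)$, and while $\phi(h)$ is a single element of $H$, its \emph{word length} in the generators of $H$ may grow without bound as one iterates $\phi$. This is precisely the phenomenon of ``depth'': bounded depth would guarantee that the filling complexity stays uniformly bounded, but Theorem~\ref{MM} asserts semistability for \emph{all} finitely presented bases, so the argument must tame this length distortion even in the unbounded-depth regime. The essential idea to overcome this is that, because $H$ itself is finitely presented, the images $\phi^n(h)$, however long, still lie in a single copy of $H$ whose own intrinsic geometry is fixed; so rather than filling horizontally within the ambient word metric of $G$, one fills \emph{inside the vertex space}, where finite presentation guarantees an a priori (sheet-independent) Dehn-type bound. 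Making this vertex-space filling compatible with the global properness requirement in $\widetilde Y$ — ensuring that homotopies of rays escaping to $\infty$ do not retreat into bounded regions — is the crux, and I would handle it by a careful tracking argument that keeps each piece of the homotopy at a $T$-distance comparable to that of the original ray.
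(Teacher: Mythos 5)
You should first note that the paper does not prove Theorem \ref{MM} at all: it is quoted from \cite{HNN1}, and the nearest in-paper incarnation of its method is \S\ref{basess}. Measured against that method, your skeleton --- push everything toward the ascending end of the Bass--Serre tree using conjugation $2$-cells, and fill horizontal loops inside a vertex space using a finite presentation of $H$ --- is the right one. But your diagnosis of the crux is mistaken in a way that matters. Properness of the vertical pushes is \emph{not} threatened by the growth of $|\phi^n(h)|$ and needs no Dehn-type or ``uniform complexity'' bound: the level map $P\colon X\to\mathbb R$ does all the work, since the ladder homotopy $H_e$ of Lemma \ref{push} carries $[0,1]\times[N,N+1]$ into levels $[N,N+1]$, whence $H_e^{-1}(C)\subset[0,1]\times[0,N]$ for any compact $C$ with $P(C)\subset[-N,N]$. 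You also misuse ``depth'': depth measures which words of $F(\mathcal A)$ become trivial in the base group only after taking $\phi$-preimages (i.e.\ $\ker p=\cup_i\phi^{-i}(N_0(\mathcal R,\phi))$), not length distortion under $\phi$; and when $H$ is finitely presented one simply includes a full finite presentation of $H$ in the HNN presentation, so every horizontal loop dies in its own level and bounded depth is automatic, as stated in \S\ref{Intro}. There is no ``unbounded-depth regime'' for Theorem \ref{MM} to tame.

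The genuine gap is the step you delegate to ``standard techniques'': reducing to rays that track the $t$-direction. A proper ray in $X$ need not do so --- it may descend through infinitely many levels, wandering among the (possibly infinitely many) branches of $T$ below any vertex --- and for such a ray the naive ladder homotopy obtained by concatenating the $H_e$ over its edges can fail to be proper: the number of edges at level $m$ whose ladders pass through a fixed $2$-cell at level $k$ is of order $L^{k-m}$ (where $L$ is the longest $|\phi(a)|$), so as $m\to-\infty$ a descending proper ray can meet infinitely many such edges, and then infinitely many rungs of the combined homotopy hit a single compact set. The finiteness argument of Lemma \ref{straight} (the $St^{L^N+N}(v)$ estimate) is available only because the ray there is an $\mathcal A$-ray lying in the single level $\Lambda$; that is exactly why Proposition \ref{strongss} is stated for the cosets $t^NAt^{-N}$ rather than for arbitrary rays. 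So your outline is missing the argument that first traps an arbitrary ray (and loops based on it) in a region of bounded-below level, or else treats the downward-escaping parts separately --- compare the analysis of the components of $X-A\cdot Q(M)$ in Lemmas \ref{nbhd}--\ref{corner}, where loops in downward components are killed inside a single level via the vertex-space filling, and only $J$-bounded rays are pushed vertically. Your ``careful tracking argument that keeps each piece of the homotopy at a $T$-distance comparable to that of the original ray'' is a restatement of what must be proved, not a proof. The same looseness affects your first step: ``the complement of any large ball stays connected through the $t$-flow'' is asserted rather than argued, and it must use infiniteness of $H$ somewhere, since for finite $H$ the group $G$ is two-ended and the claimed $1$-endedness is false.
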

The line of proof used for this result fails when $H$ is only finitely generated and it has been suggested since the 1980's that a promising place to search for a group with non-semistable fundamental group at $\infty$ is among the finitely presented ascending HNN extensions with finitely generated base. More specifically, A. Ol'shanskii and M. Sapir \cite{OS1} and \cite{OS2} have constructed a finitely generated infinite torsion group $\bar {\mathcal H}$ and finitely presented ascending HNN extension $\mathcal G$ of $\bar{\mathcal H}$  which has been suggested as a possible group with non-semistable fundamental group at $\infty$. 

In $\S$\ref{HNNcomb}, we show that the collection of finitely presented ascending HNN extensions of finitely generated groups is naturally divided into two classes - those with what is called {\it bounded depth} and those of {\it infinite/unbounded depth}. If the finitely generated base is finitely presented, then the resulting ascending HNN extension has bounded depth. The Ol'shanskii-Sapir group $\mathcal G$ has bounded depth and is semistable at $\infty$ by our main theorem. 

\begin{theorem} \label{mainbd} 
Suppose $G$ is a finitely presented ascending HNN extension of a finitely generated group $A$ and $G$ has bounded depth. Then $G$ has semistable fundamental group at $\infty$.
\end{theorem}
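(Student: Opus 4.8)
The plan is to work in the universal cover $\tilde X$ of the presentation $2$-complex of $\mathcal P$ and to exploit its structure as a tree of spaces over the Bass--Serre tree $T$ of the HNN decomposition. Each vertex space is a copy of the relative Cayley complex $Y$ of $A$ -- the Cayley graph of $A$ with a $2$-cell attached for every translate of a relator in $\mathcal R$ -- and the conjugation $2$-cells glue these copies along $T$. Because $\phi$ is a monomorphism with $t^{-1}At=\phi(A)\subseteq A$, the tree $T$ has a single end in the ``up'' ($+t$) direction and branches only downward, while the up-edge space leaving each vertex space is a full copy of that space, glued isomorphically below and included into the $\phi(A)$-part of the space one level higher. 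The essential difficulty, absent in Theorem~\ref{MM}, is that $A$ need not be finitely presented, so the vertex spaces $Y$ are typically not simply connected: a loop in $Y$ that is trivial in $A$, hence in $\tilde X$, need not bound using only the $\mathcal R$-cells at a single level. Granting the standard reduction to the one-ended case, the goal is to properly homotope any two proper rays to one another.

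First I would record the reduction of semistability to its two asymptotic ingredients and invoke the companion result (established in general earlier) that one of them holds for every ascending HNN extension. This is natural because the up-direction of $\tilde X$ is assembled from the gluings $Y\hookrightarrow Y$ toward the unique up-end of $T$, and concretely it lets me replace each competing proper ray by a properly homotopic ray that climbs monotonically up the $t$-direction. Thus I may assume both rays are of this ascending type and concentrate on sliding one ascending ray across to the other while staying near infinity.

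The heart of the argument is the second ingredient, and this is where bounded depth enters. The definition of depth supplies a single function bounding, in terms of length, how far up the $t$-direction a null-homotopic loop of a vertex space must be pushed before it becomes a product of $\mathcal R$-relators -- equivalently, a bound $d$ with $\phi^{d}(w)$ lying in the normal closure of $\mathcal R$ in $F(\mathcal A)$ for every relator $w$ of $A$ of bounded length. Using this, I would cap the tracking loops produced while sliding one ascending ray onto another: such a loop at a high level has controlled length, so bounded depth bounds it by a disk that travels only a bounded number of levels further up and therefore remains outside any prescribed compact set. Assembling these disks along the rays yields the required proper homotopy, and running the same bookkeeping over an exhaustion of $\tilde X$ verifies the Mittag--Leffler condition for the inverse system of fundamental groups of complements.

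I expect the main obstacle to be the simultaneous uniformity and properness of this filling. Bounded depth controls how far up one must travel to kill a given loop, but one must at the same time control the \emph{lengths} of the tracking loops generated by the slide and ensure that the compounded disks do not drift back into the prescribed compact set; keeping the tree-displacement, the word-length growth under iterated $\phi$, and the properness of the total homotopy mutually compatible is the delicate point. Managing the downward branching of $T$, so that the argument is insensitive to which down-end a ray might initially approach before it is pushed up, is the remaining technicality, and it is precisely here that the bounded-depth hypothesis, rather than mere finite generation of $A$, is indispensable.
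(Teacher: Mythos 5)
Your overall skeleton does match the paper's: reduce semistability to the two conditions of Theorem \ref{GGM}, observe that the first one (semistability of $t^NAt^{-N}$ at $\infty$ in $X$, Proposition \ref{strongss}) holds for every ascending HNN extension regardless of depth, and reserve bounded depth for the second (co-semistability). But your execution of the second ingredient rests on a misreading of bounded depth that breaks the key step. Bounded depth $B$ says $\ker\bigl(F(\mathcal A)\to A\bigr)=\phi^{-B}\bigl(N_0(\mathcal R,\phi)\bigr)$ where $N_0(\mathcal R,\phi)=N\bigl(\cup_{j=0}^{\infty}\phi^j(\mathcal R)\bigr)$ --- not, as you write, that $\phi^{d}(w)$ lies in the normal closure of $\mathcal R$ alone. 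Consequently the filling disk for a null-homotopic $\mathcal A$-loop at level $L$ is bounded \emph{above} by level $L+B$ but is completely uncontrolled \emph{below}: to cap a conjugate of $\phi^j(r)$ one must descend $j$ levels to reach an $\mathcal R$-cell, and $j$ is unbounded (Remark \ref{loopkill}, Lemma \ref{killD} give $im(P(H))\subset(-\infty,L+B]$, with no lower bound). So your central move --- push tracking loops to a high level above the prescribed compact set $C$ and cap them with disks that ``travel only a bounded number of levels further up'' --- fails: the disk may plunge arbitrarily far down, straight through $C$.

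The paper's geometry is exactly inverted from yours. Given $C$ with $P(C)\subset[N-M,N]$, it deletes $D=t^NA\cdot Q(M+B+1)$, which extends $B+1$ levels \emph{below} $C$. Loops in the downward-branching components of the complement (those other than $t^NK_0$, classified in Lemma \ref{nbhd2}) live at levels below $N-M-B-1$ and are filled by homotopies with $P$-image in $(-\infty,N-M-1]$ (Lemma \ref{hup}, part 1): the filling is positioned entirely \emph{below} $C$, precisely so that its uncontrolled downward excursions are harmless --- this is the only place bounded depth is used. Loops in the distinguished component $t^NK_0$ are never capped at all; they are pushed properly to infinity upward using only conjugation cells (Lemmas \ref{push}, \ref{hup} part 2, and \ref{corner}), which requires no depth hypothesis. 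Your proposal has no mechanism for loops trapped in the downward components --- they cannot be ``pushed up'' past $D$, and they are where the failure of finite presentability of $A$ concentrates --- while your filling-above-$C$ step is unsound. Repairing it would essentially force you to reproduce the component analysis of Lemmas \ref{nbhd} and \ref{nbhd2} and the below-$C$ placement of the bounded-depth fillings.
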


Semistable fundamental group at $\infty$ for {\it finitely generated} groups was defined in the mid-1980's (\cite{M4}). While we are not concerned with that notion here, the following result (Theorem 4, \cite{M4}) is connected to the ideas in this paper.

\begin{theorem} Suppose $G$ is an ascending HNN extension of a finitely generated 1-ended group $A$. If $A$ is semistable at $\infty$, then $G$ is semistable at $\infty$. 
\end{theorem}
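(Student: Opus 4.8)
The plan is to exploit the canonical homomorphism $p\colon G\to\mathbb Z$ sending $t\mapsto 1$ and $A\mapsto 0$, together with the geometry of the Bass--Serre tree $T$ of the ascending HNN decomposition. Since $A$ need not be finitely presented, $G$ need not be finitely presented either, so Theorem~\ref{MM} is unavailable and a direct argument in the finitely generated category of $\S$\ref{ss} is forced. Fix a finite generating set $\mathcal A$ of $A$ and take $\mathcal A\cup\{t\}$ as a generating set for $G$; let $\Gamma$ be the resulting Cayley graph and $X$ the $2$-complex obtained by attaching $2$-cells along relators, so that the finitely generated notion of semistability applies. First I would record the geometric picture: each left coset $gA$, with its $\mathcal A$-edges, is an isometrically embedded copy of the Cayley graph $\Gamma_A$ of $A$ (a ``sheet''), the $t$-edges join consecutive sheets, and the conjugation relations $t^{-1}at=\phi(a)$ supply square $2$-cells relating a horizontal path $w$ at height $n$ to the path $\phi(w)$ at height $n+1$. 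Because $\phi$ is a monomorphism into $A$, every vertex has a unique upward $t$-edge, so $G$ fixes the end of $T$ at $+\infty$ and $p$ is the associated height function. Noting that $G$ is $1$-ended (its $\mathbb Z$-quotient has the infinite kernel $N=\bigcup_n t^nAt^{-n}$), semistability reduces to showing that any proper ray in $\Gamma$ is properly homotopic in $X$ to the fixed upward ray $\rho=(1,t,t^2,\dots)$.

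The heart of the argument is a \emph{sliding-up} procedure. The oriented conjugation squares let me homotope any horizontal edge path lying in a sheet at height $n$ to its $\phi$-image one sheet higher, at the cost of two vertical $t$-edges; crucially this move is always available (upward replaces $a$ by $\phi(a)$, which is defined on all of $A$), whereas the downward move would require membership in $\phi(A)$. Using this I would straighten an arbitrary proper ray $r$, repeatedly pushing its horizontal excursions upward so as to homotope $r$, within complements of larger and larger compacta, to a ray that climbs monotonically in height. To compare two such upward rays emanating from a common region I would connect their feet by a horizontal path inside a single sheet and then slide that connecting path up the stack; here is where the hypothesis on $A$ enters, since filling and controlling these horizontal connectors at every height is precisely a proper-homotopy problem inside copies of $\Gamma_A$, governed by the $1$-endedness and semistability of $A$.

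Combining these, I would assemble a proper homotopy from $r$ to $\rho$: the vertical sliding drives everything toward the $+\infty$ end, so the homotopy escapes every compact set in the height direction, while the semistability of $A$ supplies, inside each sheet, the horizontal fillings needed to reconcile different downward branches of the tree. The main obstacle I expect is \emph{properness} of the total homotopy. Because $A$ need not be finitely presented, $X$ is not locally finite and one cannot invoke the standard proper-homotopy machinery; instead I must extract from the semistability of $A$ a single ``compact-to-compact'' control function and apply it uniformly across all infinitely many sheets at once, while ensuring that the downward branching of $T$ (the cosets of $\phi(A)$) never lets the horizontal fillings drift back into a prescribed compactum. Managing this uniform control, and the interaction between the vertical sliding and the horizontal $A$-fillings, is the crux of the proof; the ascending hypothesis is exactly what makes it tractable, since the fixed end at $+\infty$ furnishes a single coherent direction in which to push everything out.
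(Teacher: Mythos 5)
Two preliminary remarks. First, the paper does not prove this statement at all: it is quoted as Theorem 4 of \cite{M4}, so there is no in-paper proof to match yours against. The closest machinery in the paper is $\S$\ref{basess}, where Lemmas \ref{push}, \ref{straight} and \ref{string} are exactly your ``sliding-up'' conjugation squares, with properness controlled by the level map $P$; but note that this machinery, as developed there, only proves Proposition \ref{strongss} --- that proper $\mathcal A$-rays \emph{inside a single sheet} are properly homotopic to the vertical $t$-ray --- not the theorem you are after. Second, a small but real error: your justification of $1$-endedness (``the $\mathbb Z$-quotient has infinite kernel'') is not a valid criterion, since $F_2$ surjects onto $\mathbb Z$ with infinite kernel and has infinitely many ends. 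One-endedness here genuinely uses the $1$-ended base (or an argument as in \cite{HNN1}), not merely the infinitude of $\bigcup_n t^nAt^{-n}$.

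The serious gap is that your outline defers precisely the step that constitutes the theorem, and the naive version of your assembly demonstrably fails. Your claim that ``the vertical sliding drives everything toward the $+\infty$ end, so the homotopy escapes every compact set'' is false for proper rays with unbounded descent: the vertical ray at $t^{-n}$ passes through the identity vertex $\ast$ for \emph{every} $n$, so already for the ray $r(n)=t^{-n}$ the ladder homotopy to vertical rays (degenerate strips on $t^{\pm1}$-edges, $H_e$ of Lemma \ref{push} on $\mathcal A$-edges) has the diagonal $\{(n,n)\}$ in the preimage of $\ast$ and is not proper. Pushing up from deep in the kernel inevitably crosses bounded regions unless one first reroutes the descending segments \emph{horizontally} inside deep sheets, far from the relevant cosets; this is exactly where $1$-endedness of $A$ (to connect the level-crossings of $r$ within a sheet while avoiding a prescribed set) and semistability of $A$ (to fill the resulting connectors by proper homotopies, uniformly over the infinitely many $G$-translates of $\Lambda$, in a complex that is not locally finite) must actually be deployed. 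You name this step --- the ``uniform compact-to-compact control function'' and the control of downward branching --- but give no mechanism for it, and no reduction showing the rerouted ray can then be slid up properly. Compare $\S$\ref{Smain}, where the analogous difficulty (loops and rays in components below a coset neighborhood) is what the bounded-depth hypothesis and Lemmas \ref{nbhd}--\ref{corner} are built to handle. As it stands your proposal is a correct identification of the architecture of Mihalik's argument together with an accurate list of its obstacles, but the entire technical content of Theorem 4 of \cite{M4} lies in the part you postpone, so it is a plan rather than a proof.
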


To prove Theorem \ref{mainbd} we use the main theorem of \cite{GGM1} which implies that a finitely presented group $G$ has semistable fundamental group at $\infty$ if and only if two (somewhat orthogonal) weaker semistability conditions hold for $G$.  The rest of the paper is organized as follows. 

In $\S$\ref{ss}, we define semistability at $\infty$ for spaces and groups, and list a number of equivalent formulations of this notion. Two weaker notions, the semistablility of a finitely generated subgroup $J$ in an over group $G$ and, the co-semistability of $J$ in $G$ are defined. 

In $\S$\ref{basess} we prove that if $A$ is an infinite finitely generated base group of a finitely presented ascending HNN extension $G$ and $t$ is the stable letter, then for any $N\geq 0$, $t^NAt^{-N}$  is semistable at $\infty$ in $G$ (regardless of depth).  By the main theorem of \cite{GGM1} this reduces the proof of our main theorem to showing that $G$ satisfies the second semistability condition of \cite{GGM1}. 

In $\S$\ref{HNNcomb} we review the combinatorial group theory of ascending HNN groups and define what it means for such a group to have bounded depth. Examples of Grigorchuk and Ol'shanskii-Sapir of ascending HNN extensions with bounded depth are reviewed and a method for constructing ascending HNN extensions with unbounded depth is given. 

In $\S$\ref{Smain} the bulk of the proof of our main theorem is given. We show that if $G$ is an ascending HNN extension of a finitely generated group $A$, $\mathcal P$ is a finite HNN presentation with bounded depth for $G$, and $X$ is the Cayley 2-complex for $\mathcal P$, then for each compact subset $C$ of $X$, there is an integer $N(C)\geq 0$ such that $t^NAt^{-N}$ is co-semistable at $\infty$ in $X$ with respect to $C$. We also prove a result (Theorem \ref{FP}) that considers the case when $A$ is finitely presented and connects this case to several papers already in the literature. When $A$ is finitely presented and $C$  is compact in $X$,  we show there is an integer $N(C)\geq 0$ and compact set $Q(C)$ containing $C$ such that loops in $X-(t^{N}At^{-N}) Q$ are homotopically trivial in $X-(t^{N}At^{-N}) Q$.

\section{The basics of semistability at $\infty$ for groups}\label{ss} 

Suppose $K$ is a locally finite connected CW complex. A {\it ray} in $X$ is a map $r:[0,\infty)\to K$. 
The space $K$ has {\it semistable fundamental group at $\infty$} if any two proper rays in $K$ converging to the same end are properly homotopic.  Suppose  $C_0, C_1,\ldots $ is a collection of compact subsets of a 1-ended locally finite complex $K$ such that $C_i$ is a subset of the interior of $C_{i+1}$ and $\cup_{i=0}^\infty C_i=K$, and $r:[0,\infty)\to K$ is proper, then $\pi_1^\infty (K,r)$ is the inverse limit of the inverse system of groups:
$$\pi_1(K-C_0,r)\leftarrow \pi_1(K-C_1,r)\leftarrow \cdots$$
This inverse system is pro-isomorphic to an inverse system of groups with epimorphic bonding maps if and only if $K$ has semistable fundamental group at $\infty$.  When $K$ is 1-ended with semistable fundamental group at $\infty$, $\pi_1^\infty (K,r)$ is independent of proper base ray $r$. 

If for any compact set $C$ in $K$ there is a compact set $D$ in $K$ such that loops in $K-D$ are homotopically trivial in $X-C$ (equivalently the above inverse sequence of groups is pro-trivial), then $K$ is {\it simply connected at $\infty$}.

There are a number of equivalent forms of semistability which are collected as Theorem 3.2 of \cite{CM2}.

\begin{theorem}\label{ssequiv} {\bf (G. Conner and M. Mihalik \cite{CM2})}
Suppose $K$ is a locally finite, connected and 1-ended CW-complex. Then the following are equivalent:
\begin{enumerate}
\item $K$ has semistable fundamental group at $\infty$.
\item For any proper ray $r:[0,\infty )\to K$ and compact set $C$, there is a compact set $D$ such that for any third compact set $E$ and loop $\alpha$ based on $r$ and with image in $K-D$, $\alpha$ is homotopic $rel\{r\}$ to a loop in $K-E$, by a homotopy with image in $K-C$. 
\item For any compact set $C$ there is a compact set $D$ such that if $r$ and $s$ are proper rays based at $v$ and with image in $K-D$, then $r$ and $s$ are properly homotopic $rel\{v\}$, by a proper homotopy in $K-C$. 
\end{enumerate}
If $K$ is simply connected, then a fourth equivalent condition can be added to this list:

4. Proper rays $r$ and $s$ based at $v$ are properly homotopic $rel\{v\}$. 
\end{theorem}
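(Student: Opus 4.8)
The plan is to fix the exhaustion $C_0\subset C_1\subset\cdots$ and reduce everything to a single algebraic property of the inverse sequence
$$\pi_1(K-C_0,r)\leftarrow\pi_1(K-C_1,r)\leftarrow\cdots$$
namely the Mittag-Leffler (pro-epimorphic) condition: for each $i$ there is $j\ge i$ so that for every $k\ge j$ the image of $\pi_1(K-C_k)$ in $\pi_1(K-C_i)$ equals the image of $\pi_1(K-C_j)$. By the remark preceding the statement, this sequence is pro-isomorphic to one with epimorphic bonding maps — precisely the Mittag-Leffler condition — if and only if $K$ is semistable at $\infty$, so condition (1) is already identified with Mittag-Leffler, and since $K$ is $1$-ended this is independent of the base ray. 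I would then establish $(1)\Leftrightarrow(2)$, then $(2)\Rightarrow(3)$, then $(3)\Rightarrow(1)$, and finally add $(4)$ under the hypothesis $\pi_1(K)=1$.

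The equivalence $(1)\Leftrightarrow(2)$ is a direct translation through this dictionary. For $(1)\Rightarrow(2)$, given $r$ and $C$ choose $i$ with $C\subseteq C_i$, take the Mittag-Leffler index $j$ for $i$, and set $D=C_j$; a loop $\alpha$ in $K-D$ represents a class of $\mathrm{im}(\pi_1(K-C_j)\to\pi_1(K-C_i))$, and for $k$ chosen with $E\subseteq C_k$ this image equals $\mathrm{im}(\pi_1(K-C_k)\to\pi_1(K-C_i))$, so $\alpha$ is homotopic in $K-C_i\subseteq K-C$ to a loop in $K-C_k\subseteq K-E$. For $(2)\Rightarrow(1)$ one runs this backwards: applying $(2)$ with $C=C_i$, choosing $j$ with $C_j\supseteq D$ and taking $E=C_k$ for $k\ge j$ shows that every class in $\mathrm{im}(\pi_1(K-C_j)\to\pi_1(K-C_i))$ already comes from $\pi_1(K-C_k)$, which is image stabilization at level $i$. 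The only bookkeeping is that, because the homotopies are taken $rel\{r\}$, the loop basepoints are transported along $r$ and are allowed to slide, which is how one reconciles the different basepoints appearing at different levels.

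The geometric content is concentrated in the two remaining implications. For $(3)\Rightarrow(1)$ I would first reduce two arbitrary same-end proper rays to rays sharing an initial point $v$ (join their starting points by a path and absorb it, which alters a ray only within its proper homotopy class), and then use that a proper ray meets any fixed compact $D$ only in a bounded parameter interval, so its tail lies in $K-D$; applying $(3)$ at successively deeper levels to these tails and splicing the resulting partial homotopies produces a proper homotopy. The hard direction is $(2)\Rightarrow(3)$: from the loop-pushing of $(2)$ I must manufacture, for a prescribed $C$, a single $D$ so that any two rays based at a common $v$ and lying in $K-D$ are properly homotopic $rel\{v\}$ inside $K-C$. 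This is done by a telescoping construction — forming the ``ladder'' loops built from initial subarcs of the two rays together with connecting arcs, pushing each such loop outward by $(2)$ so that the successive fillings are confined to complements $K-C_i$ with $i\to\infty$, and assembling these fillings into one map $[0,\infty)\times[0,1]\to K$.

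Finally, when $\pi_1(K)=1$, $(4)$ follows from $(1)$: the path $H(0,\cdot)$ swept out by the basepoint of a proper homotopy supplied by $(1)$ is a loop, which is null-homotopic, so it can be capped off to upgrade the homotopy to one that is $rel\{v\}$; the reverse implication $(4)\Rightarrow(1)$ is immediate. The step I expect to be the main obstacle is the telescoping in $(2)\Rightarrow(3)$: assembling infinitely many partial homotopies, each confined to a prescribed complement $K-C_i$, into a single globally defined map and verifying that the result is \emph{proper} while remaining $rel\{v\}$. Arranging the indices so that the regions carrying the successive fillings march out to infinity — so that no compact set meets infinitely many of them and the assembled homotopy is proper — is exactly where the uniform ``for each $C$ a single $D$'' strength of $(2)$ and $(3)$, rather than the bare existence of homotopies in $(1)$, must be used, and it is the delicate bookkeeping of the argument.
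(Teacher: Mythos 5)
You should first note that the paper itself contains \emph{no} proof of this statement: it is quoted verbatim, with attribution, as Theorem 3.2 of \cite{CM2}, and is used as imported background. So there is no in-paper argument to compare against, and your proposal has to be judged against the standard proof of these equivalences. At that level your architecture is the usual one: identifying (1) with the Mittag--Leffler (pro-epimorphic) property of $\pi_1(K-C_0,r)\leftarrow\pi_1(K-C_1,r)\leftarrow\cdots$ via the remark preceding the statement (which is legitimate to cite, though you should be aware that this remark already carries essentially the full weight of $(1)\Leftrightarrow(2)$, since (2) is little more than the geometric transcription of Mittag--Leffler with basepoints sliding along $r$), the ladder/telescope construction for the ray conditions, and the capping-off of the basepoint track using $\pi_1(K)=1$ for (4) are all correct in outline.

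There is, however, a genuine gap in your $(2)\Rightarrow(3)$ step, and it is precisely a quantifier mismatch that your sketch does not confront. In (2) the compact set $D$ is permitted to depend on the proper ray $r$ as well as on $C$; in (3) the set $D$ must be chosen from $C$ alone, \emph{before} the basepoint $v$ and the rays $r,s$ are given, and must work uniformly for all of them. Your telescoping plan forms ladder loops from initial subarcs of the two given rays and pushes them outward ``by (2)''---but to invoke (2) you must name a base ray, and the only available candidates are the given rays themselves (the ladder loops are based at $v$ and at points along $r$, not on any pre-fixed reference ray). The resulting $D$ then depends on the pair $(r,s)$, which is exactly what (3) forbids. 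Closing this gap requires an additional idea that your proposal lacks: either prove that in a $1$-ended space the $D$ of (2) can be chosen independently of the ray---typically by a diagonal/contradiction argument in which a putative failure of uniformity at compacta $C_n$ marching to infinity is assembled into a single proper ray (or pair of rays) contradicting the Mittag--Leffler property for the fixed base ray---or else derive (3) directly from (1) by such a contradiction argument rather than pointwise from (2). Without one of these, the ``delicate bookkeeping'' you flag at the end is not merely delicate: the indices $i_n\to\infty$ you need for the successive fillings cannot even be chosen until $r$ and $s$ are known, so the construction as described does not produce the uniform $D(C)$ that condition (3) asserts.
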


If $G$ is a finitely presented group and $Y$ is  a finite complex with $\pi_1(Y)=G$ then $G$ has {\it semistable (respectively simply connected)} fundamental group at $\infty$ if the universal cover of $Y$ has semistable (respectively simply connected) fundamental group at $\infty$. This definition only depends on the group $G$.

In \cite{GGM1} we consider finitely generated groups acting (perhaps not co-compactly) as covering transformations on 1-ended CW complexes $X$ and we say what it means for such a group to be semistable at $\infty$ in $X$ with respect to a given compact subset of $X$. In this paper we only need consider a more simple notion. Suppose $A$ is a finitely generated infinite subgroup of a finitely presented 1-ended group $G$. Say  $\mathcal A\cup \mathcal S$ is a finite generating set of $G$, where $\mathcal A$ generates $A$. Let $X$ be the Cayley 2-complex for some finite presentation $\mathcal P$ (with generating set $\mathcal A\cup \mathcal S$) of $G$. So $X$ is the simply connected  2-dimensional complex with 1-skeleton equal to the Cayley graph of $G$ with respect to $\mathcal A\cup \mathcal S$. The vertex set of $X$ is $G$ and each edge of $X$ is labeled by an element of $\mathcal A\cup \mathcal S$. For each vertex $v$ of $X$ and relation $r$ of $\mathcal P$ there is a 2-cell with boundary equal to the edge path loop at $v$ with edge labels spelling the word $r$.   Let $\ast$ be the identity vertex of $X$. Let $\Lambda(A, \mathcal A)\subset X$ be the Cayley graph of $A$ with respect to $\mathcal A$. If $g\in G$ and $q$ is an edge path in $g\Lambda$, then $q$ is called an $\mathcal A $-{\it path} in $X$. Note that $q$ is an $\mathcal A$-path if and only if each edge of $q$ is labeled by an element of $\mathcal A$. 

If $g\in G$ and $C$ is compact in $X$ then we say $gAg^{-1}$ is {\it  semistable at $\infty$ in} $X$ (or in $G$) {\it with respect to $C$} if there is a compact set $D(C)\subset X$ such that if $r$ and $s$ are two proper edge path rays in $g\Lambda(A,\mathcal A)-D$ 
based at the same vertex $v\in gA$ then $r$ and $s$ are properly homotopic $rel\{v\}$ by a proper homotopy in $X-C$. This definition is equivalent to the one of \cite{GGM1}. If $gAg^{-1}$ is semistable at $\infty$ with respect to every compact subset of $X$, then we say $gAg^{-1}$ is semistable at $\infty$ in $X$ (or in $G$). If $A$ is 1-ended and semistable at $\infty$, then $gAg^{-1}$ is always semistable at $\infty$ in $X$ ($G$).

In $\S$\ref{basess} we prove:

\begin{proposition}\label{strongss} 
If $G$ is a finitely presented ascending HNN extension of a finitely generated infinite group $A$ and $t$ is the stable letter, then for all $N\geq 0$, $t^NAt^{-N}$ is  semistable at $\infty$ in $G$.
\end{proposition}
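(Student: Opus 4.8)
The plan is to exploit the \emph{ascending} structure directly: pushing in the $t$-direction maps the coset $t^N\Lambda$ into $t^{N+1}\Lambda$, and iterating this push infinitely collapses \emph{every} proper ray in $t^N\Lambda$ onto a single canonical ``vertical'' $t$-ray, all while remaining far out in $X$. First I would set up a level function: let $\nu\colon G\to\mathbb Z$ be the homomorphism with $\nu(t)=1$ and $\nu(\mathcal A)=0$. Since any word representing $g$ has $t$-exponent sum $\nu(g)$, we get $|g|_G\ge|\nu(g)|$; in particular every vertex of $t^k\Lambda$ (the coset $t^kA$, entirely of $\nu$-value $k$) satisfies $|g|_G\ge k$. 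This is the geometric fact that makes the ``up'' direction escape to infinity uniformly over the whole coset. Next I record the push-up corridor: using $at=t\phi(a)$ one checks that right multiplication by $t$ carries $t^Na$ to $t^{N+1}\phi(a)$ and an $\mathcal A$-edge labeled $c$ to the $\mathcal A$-path spelling $\phi(c)$, so $\cdot\,t$ maps $t^N\Lambda$ into $t^{N+1}\Lambda$ realizing $\phi$. For each $\mathcal A$-edge $e=[u,uc]$ the conjugation relator $t^{-1}ct\phi(c)^{-1}$ supplies a $2$-cell bounded by $e$, the two $t$-edges at its endpoints, and the $\phi(c)$-path $[ut,uct]$. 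Assembling these along any $\mathcal A$-path $p$ gives a $t$-corridor: a proper homotopy, supported in $\nu$-levels $[\ell,\ell+1]$, from $p$ to its pushed copy $pt$, moving the basepoint of $p$ along a single $t$-edge.

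The heart of the argument is to iterate the corridor infinitely. Define the telescope homotopy $\Phi(x,y)=r(x)\,t^{y}$, with the corridor $2$-cells filling the integer steps in $y$; its restriction to $y=0$ is $r$ and to $x=0$ is the vertical $t$-ray $\rho_v=(v,vt,vt^2,\dots)$. Because $\Phi(x,y)$ has $\nu$-level $N+y$ and $r$ is proper, $\Phi$ is a proper map of the quadrant, and precomposing with the proper reparametrization $g(x,s)=(x(1-s),xs)$ of $[0,\infty)\times[0,1]$ turns it into a proper homotopy \emph{rel} $v$ from $r$ to $\rho_v$. The same construction gives a proper homotopy rel $v$ from $s$ to $\rho_v$, and composing the two yields the desired proper homotopy rel $v$ from $r$ to $s$. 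I would emphasize that this step uses only that $A$ is infinite (so that proper rays exist) and never invokes bounded depth, which is exactly why the conclusion holds regardless of depth.

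It then remains to choose $D$ so that everything avoids the prescribed compact set $C$. Fix $\beta$ with $C\subseteq\bar B(\ast,\beta)$. Since $\Phi$ lives in $\nu$-levels $\ge N$, every corridor at level $>\beta$ automatically misses $C$; only the finitely many corridors at levels $N,\dots,\lceil\beta\rceil$ are a concern. Using that $G$ is finitely presented (finitely many cell types, and $L=\max_{c\in\mathcal A}|\phi(c)|<\infty$) one obtains a uniform constant $W_0=W_0(C,\phi)$ bounding the distance from any point of these low corridors to the image of $r$. Taking $D=t^N\Lambda\cap\bar B(\ast,\beta+W_0)$ then forces the entire homotopy off $C$ whenever $r$ and $s$ (and hence the shared basepoint $v\notin D$) lie outside $D$, since local finiteness makes $D$ compact.

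I expect this last bookkeeping to be the main obstacle. The corridors \emph{widen} horizontally under the iterated $\phi^{\,j}$ (word lengths $|\phi^{\,j}(c)|$ may grow without bound), so one cannot naively bound the total width of the telescope; instead the argument must rely on the level estimate $|g|_G\ge\nu(g)$ to isolate the boundedly many low corridors that could meet $C$ and control only their width. The conceptual crux, by contrast, is clean: the $t$-corridors together with the uniform escape of $t^k\Lambda$ let an infinite telescope retract any proper ray in the coset onto the vertical $t$-spine without ever dropping below level $N$.
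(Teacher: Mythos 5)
Your proposal is correct and takes essentially the same route as the paper: your $t$-corridor telescope is precisely the homotopy of Lemmas \ref{push} and \ref{string} collapsing an $\mathcal A$-ray onto the vertical $t$-ray, and your two-regime properness control (the level function handles corridors above level $\beta$, while the uniform width bound $W_0$ coming from $L=\max_{a\in\mathcal A}|\phi(a)|$ handles the finitely many low corridors) is exactly the estimate $H_e([0,1]\times [0,N])\subset St^{L^N+N}(v)$ of Lemma \ref{straight}. The only cosmetic difference is your choice of $D$ as a metric ball in $t^N\Lambda$ rather than the paper's compact set containing $t^{-N}C$ together with the finitely many bad edges, and your working directly in $t^N\Lambda$ instead of working in $\Lambda$ with $t^{-N}C$ and translating by $t^N$ at the end.
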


The main theorem of \cite{GGM1} is significantly more general than Theorem \ref{GGM}.  In \cite{GGM1}, the main result does not require an overgroup $G$ acting cocompactly on $Y$, only that $Y$ be 1-ended and for each compact subset $C$ of $Y$, the existence a finitely generated group $J$ acting as covering transformations on $Y$ and satisfying conditions 1) and 2) below. The notion of a group $J$ being co-semistable at $\infty$ in a space is a bit technical and we define this afterwards.

\begin{theorem} \label{GGM} {\bf (R. Geoghegan, C. Guilbault and M. Mihalik \cite{GGM1})}
Suppose $G$ is a 1-ended finitely presented group acting cocompactly on a simply connected locally finite CW-complex $Y$. If for each compact set $C\subset Y$ there is an infinite finitely generated subgroup $J$ of $G$ such that 

1) $J$ is  semistable at $\infty$ in $Y$ with respect to $C$ and 

2) $J$ is co-semistable at $\infty$ in $Y$ with respect to $C$, 

\noindent then $Y$ (and hence $G$) has semistable fundamental group at $\infty$.  
\end{theorem}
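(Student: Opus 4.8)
The plan is to verify the characterization of semistability given in Theorem~\ref{ssequiv}(3): for each compact $C \subset Y$ I must produce a compact $D \supseteq C$ so that any two proper edge-path rays $r,s$ issuing from a common vertex $v$ and having image in $Y - D$ are properly homotopic $rel\{v\}$ inside $Y - C$. Since the hypothesis furnishes a finitely generated subgroup $J = J(C)$ for each compact set, and since enlarging $C$ only strengthens both conditions, I would first replace $C$ by a convenient compact full subcomplex $\hat{C} \supseteq C$, extract the associated $J$, and record the two data it supplies: a compact $D_s = D_s(\hat{C})$ witnessing semistability of $J$ (so that proper rays in a single translate $g\Lambda - D_s$ with common basepoint are properly homotopic $rel$ basepoint in $Y - \hat{C}$), and a compact set $D_c = D_c(\hat{C})$ witnessing co-semistability, which I expect controls the portion of $Y$ lying outside a $J$-cocompact neighborhood of the orbit subcomplex $\Lambda = \Lambda(J)$.

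The geometric engine is that the $G$-translates $\{g\Lambda : g \in G\}$ of the Cayley subcomplex of $J$ coarsely fill $Y$: because $G$ acts cocompactly and $J$ is infinite, every vertex of $Y$ lies within a uniformly bounded distance of some translate $g\Lambda$. Thus I would use co-semistability as a \emph{transverse} move: given a proper ray $r$ running out to infinity in $Y - D$, co-semistability with respect to $\hat{C}$ should let me properly homotope $r$, $rel\{v\}$ and within $Y - C$, to a ray $r'$ that tracks along (a bounded neighborhood of) a single translate $g\Lambda$; the same move applied to $s$ yields $s'$ tracking along a translate $h\Lambda$. The semistability of $J$ is then the \emph{longitudinal} move: once two rays lie in a common translate and run beyond $D_s$, they are properly homotopic $rel$ basepoint in $Y - \hat{C} \subseteq Y - C$.

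It remains to bridge the two translates $g\Lambda$ and $h\Lambda$. Here I would again lean on co-semistability together with the $1$-endedness and cocompactness of $G$: a proper arc in $Y - D$ joining the tails of $r'$ and $s'$ can be homotoped ($rel$ endpoints, in $Y - C$) across the complement of the $J$-neighborhood so as to exhibit $r'$ and $s'$ as properly homotopic into a common translate, after which the longitudinal move finishes the argument. Concatenating the finitely many homotopies $r \simeq r' \simeq s' \simeq s$, each performed $rel\{v\}$ and each avoiding $C$, produces the required proper homotopy; choosing $D$ to dominate $D_s$, $D_c$, and the bounded tracking constants makes every step legitimate.

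The main obstacle is precisely the interface between the two hypotheses: I must choose $D$ large enough, and arrange the transverse co-semistability homotopy to land \emph{beyond} $D_s$ inside a single translate, so that the longitudinal semistability homotopy is applicable---all while keeping the entire homotopy properly outside $C$ and proper (i.e.\ with compact point-preimages). Matching these competing compact-set budgets, and verifying that the bridging move between distinct translates can be realized without re-entering $C$, is the delicate bookkeeping that carries the real content; the cocompactness of the $G$-action and the $1$-endedness of $Y$ are what make the finitely many required translates and their connecting arcs uniformly controllable.
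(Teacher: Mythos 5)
You should first note that this paper contains no proof of Theorem \ref{GGM}: it is imported wholesale from \cite{GGM1} and used as a black box, so your proposal can only be judged against the definitions the paper actually supplies, and against those definitions it has a genuine gap centered on what co-semistability means. By the definition at the end of $\S$\ref{ss}, co-semistability of $J$ with respect to $C$ is a \emph{loop} statement: there is a compact $C_1$ such that for every $J$-unbounded component $U$ of $Y-JC_1$ and every $J$-\emph{bounded} proper ray $r$ in $U$, loops in $U$ based on $r$ can be pushed properly to infinity along $r$ within $Y-C$. It does not furnish your ``transverse move'' --- nothing in the hypothesis lets you properly homotope an arbitrary proper ray, $rel$ its basepoint and outside $C$, onto a bounded neighborhood of a single translate $g\Lambda$; indeed, if such a move were available the theorem would be nearly immediate. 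The entire difficulty of the result in \cite{GGM1} is to control rays and loops that are neither $J$-bounded nor confined to one $J$-unbounded component of $Y-JC_1$, and your outline assumes that difficulty away in the step where the hypothesis is invoked.

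Two further mismatches compound this. First, hypothesis 1) as defined in this paper only compares pairs of $\mathcal A$-rays lying in a single translate $g\Lambda - D$ and based at a common vertex of $gA$; so even granting a transverse move you would still need a missing argument to trade a ray that merely tracks a neighborhood of $g\Lambda$ for an honest ray in $g\Lambda$ with a matching basepoint. Second, your bridging step between $g\Lambda$ and $h\Lambda$ is asserted rather than derived: $1$-endedness gives only $\pi_0$-level connectivity of complements of compacta, and producing a proper homotopy, outside $C$, carrying a ray of one translate to a ray of another is essentially the full strength of the theorem being proved --- the argument is circular exactly where you concede the ``real content'' lies. It is instructive to see how this paper actually verifies the hypotheses in $\S$\ref{Smain}: loops in the non-special complementary components are killed outright (Lemma \ref{hup}, part 1), while loops in $t^NK_0$ are pushed to infinity along $t$-labeled rays via Lemmas \ref{corner} and \ref{hup}, part 2 --- i.e., the loop-pushing formulation, not a ray-tracking one, is what conditions 1) and 2) genuinely deliver.
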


The converse of Theorem \ref{GGM} is rather straightforward.  In fact, if $Y$ (equivalently $G$) has semistable fundamental group at $\infty$, then suppose $C$ is any compact subset of $Y$ and $J$ is any infinite finitely generated subgroup of $G$ then conditions 1) and 2) hold for $J$ and $C$. Interestingly, our proof of the main theorem of this paper relies on selecting different groups $J$ for different compact sets $C$ satisfying 1) and 2).  
We apply Theorem \ref{GGM} when $G$ is an ascending HNN extension of a finitely generated group $A$, and $G$ acts cocompactly on $Y$ the Cayley 2-complex of $G$ with respect to some finite HNN presentation $\mathcal P$ (see $\S$\ref{Intro}).  In our situation, all of the subgroups $J$ of Theorem \ref{GGM} will have the form $t^NAt^{-N}$ for some $N\geq 0$. Proposition \ref{strongss} resolves part 1) of Theorem \ref{GGM} for all compacts sets. All that remains to be shown is that for each compact set $C$ in $X$ there is an integer $N(C)\geq 0$ such that $t^NAt^{-N}$ is co-semistable at $\infty$ in $Y$ with respect to $C$. We now define what that means.

Suppose $J$ is an infinite finitely generated group acting as covering transformations on the 1-ended, simply connected and locally finite CW-complex $Y$. A subset $S$ of $Y$ is {\it bounded} in $Y$ if $S$ is contained in a compact subset of $Y$. Otherwise $S$ is {\it unbounded} in $Y$. Let $q:Y\to J\backslash Y$ be the quotient map.
If $K$ is a subset of $Y$, and there is a compact subset $C_1$ of $Y$ such that $K\subset JC_1$ (equivalently $q(K)$ has image in a compact set), then $K$ is a $J$-{\it bounded} subset of $Y$. Otherwise $K$ is a $J$-{\it unbounded} subset of $Y$. If $r:[0,\infty)\to Y$ is a proper edge path ray and $qr$ has image in a compact subset of $J\backslash Y$ then 
$r$ is said to be $J$-{\it bounded}. 
Equivalently, $r$ is a $J$-bounded proper edge path ray in $S$ if and only if $r$ has image in $J C_1$ for some compact set $C_1\subset Y$. Let $\ast$ be a base vertex in $Y$. When $r$ is $J$-bounded there is an integer $M$ (depending only on $C_1$ and fixed terms) such that each vertex of $r$ is (using edge path distance) within $M$ of a vertex of $J \ast\subset Y$.

We say $J$ is {\it co-semistable at $\infty$ in $Y$ with respect to the compact subset $C$ of $Y$} if there is a compact subcomplex $C_1$ of $Y$ such that for each $J$-unbounded component $U$ of $Y-(JC_1)$, and any $J$-bounded proper ray $r$ in $U$ ``loops in $U$ and based on $r$ can be properly pushed to infinity along $r$, avoiding $C$". More specifically:



 For any loop $\alpha:[0,1]\to U$ with $\alpha(0)=\alpha(1)=r(0)$ there is a proper homotopy $H:[0,1]\times [0,\infty)\to Y-C$ such that $H(t,0)=\alpha(t)$ for all $t\in [0,1]$ and $H(0,s)=H(1,s)=r(s)$ for all $s\in [0,\infty)$.

\section{Base group semistability in an ascending HNN extension} \label{basess}
 
In this section we prove three lemmas that imply Proposition \ref{strongss}. This shows that an infinite finitely generated base group is always semistable at $\infty$ in an ascending HNN extension (regardless of bounded or unbounded depth).
Begin with a finite presentation for a group $G$ which is an ascending HNN extension with base group a finitely generated group $A$ with finite set of generators $\mathcal A$:
$$\mathcal P=\langle t, \mathcal A: \mathcal R, t^{-1}at=\phi(a)\hbox{ for all }a\in \mathcal A\rangle$$

Here $\mathcal R$ is a finite subset of the free group $F(\mathcal A)$. Consider the homomorphism $P_0:G\to \mathbb Z$ that kills the normal closure of $A$. If $g\in G$ and $P_0(g)=N$, we say $g$ is in {\it level} $N$. Let $X$ be the Cayley 2-complex for the presentation $\mathcal P$ of $G$.
Then $P_0$ can be extended to $P:X\to \mathbb R$ by taking each 2-cell corresponding to an element of $\mathcal R$ to $P_0(v)$ for any vertex $v$ of the cell, and if $D$ is  2-cell corresponding to a conjugation relation $t^{-1}at=\phi(a)$ for $a\in \mathcal A$, then $P$ maps $D$ to the interval $[N,N+1]$ (where the edge of $D$ corresponding to $a\in \mathcal A$ is mapped by $P_0$ to $N$ and those corresponding to $\phi(a)$ are mapped to $N+1$), in the obvious way.  

\begin{lemma}\label{push} 
Let $e:[0,1]\to X$ be an edge in $X$ with $e(0)=v$, $e(1)=w$ and label $a\in \mathcal A$. Let $r_v$ and $r_w$ be the edge path rays at $v$ and $w$ (respectively) each of whose edges is labeled $t$. There is a proper map $H_e:[0,1]\times [0,\infty)\to X$ such that $H_e(t,0)=e(t)$, $H_e(0,t)=r_v(t))$, $H_e(1,t)=r_w(t)$ and $P(H_e([0,1]\times [N,N+1]))\subset [N,N+1]$.
\end{lemma}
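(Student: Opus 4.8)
The plan is to realize $H_e$ as a half-infinite ``van Kampen strip'' obtained by pushing the edge $e$ one level upward at a time through the conjugation $2$-cells, with the two vertical $t$-rays $r_v$ and $r_w$ as its left and right boundary. Write $N=P_0(v)=P_0(w)$ for the common level of the endpoints of $e$ (equal since $a\in A$). The whole construction runs on the single identity supplied by the conjugation relation $t^{-1}at=\phi(a)$: since $w=va$, one gets $wt=vt\,\phi(a)$, and iterating (using $t^{-1}\phi^{k}(a)t=\phi^{k+1}(a)$) yields
$$wt^{\,n}=vt^{\,n}\,\phi^{n}(a)\qquad(n\ge 0).$$
Thus the two endpoints of $e$, transported up the $t$-rays, are joined at level $N+n$ by an $\mathcal A$-path, and the outer sides $r_v,r_w$ are automatically where they should be.

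First I would record the \emph{elementary square}. Given an edge $e'$ from $u$ to $u'=ub$ labelled $b\in\mathcal A$, the relation $t^{-1}bt=\phi(b)$ gives $u't=ut\,\phi(b)$, so the corresponding conjugation $2$-cell may be parametrized as a square whose bottom is $e'$, whose left and right sides are the $t$-edges $u\to ut$ and $u'\to u't$, and whose top is the $\mathcal A$-path from $ut$ to $u't$ spelling $\phi(b)$; by the definition of $P$ this square maps into $[P_0(u),P_0(u)+1]$. Next I define a sequence of edge paths by $p_0=e$ and, recursively, $p_{n+1}=$ the concatenation obtained from $p_n$ by replacing each edge (labelled $b$) with the $\phi(b)$-path produced when it is pushed up one level; each $p_n$ runs from $vt^{\,n}$ to $wt^{\,n}$. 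The $n$-th \emph{band} $[0,1]\times[n,n+1]$ is then the concatenation along $p_n$ of one elementary square per edge of $p_n$: its bottom is $p_n$, its top is $p_{n+1}$, and its two outermost vertical sides are the $t$-edges $vt^{\,n}\to vt^{\,n+1}$ and $wt^{\,n}\to wt^{\,n+1}$ of $r_v$ and $r_w$.

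To glue the bands into one continuous $H_e:[0,1]\times[0,\infty)\to X$ I would fix a parametrization $\sigma_n:[0,1]\to X$ of each $p_n$ and declare $H_e(\cdot,n)=\sigma_n$, choosing the $\sigma_n$ compatibly: the subinterval of $[0,1]$ carrying the $i$-th edge of $p_n$ is exactly the subinterval subdivided, in $\sigma_{n+1}$, among the edges of the matching $\phi$-block of $p_{n+1}$. With this matching the top parametrization of the $n$-th band agrees with the bottom parametrization of the $(n{+}1)$-st, so $H_e$ is continuous across each interface. The prescribed boundary data hold because $\sigma_0=e$ and $\sigma_n(0)=vt^{\,n}$, $\sigma_n(1)=wt^{\,n}$ for all $n$, so $H_e(\cdot,0)=e$, $H_e(0,\cdot)=r_v$ and $H_e(1,\cdot)=r_w$. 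Since the $n$-th band is built from conjugation $2$-cells based at level $N+n$, the level function satisfies $P\big(H_e([0,1]\times[n,n+1])\big)\subseteq[N+n,N+n+1]$ for every $n\ge0$; the bottom band $n=0$ is the level statement asserted in the lemma.

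The only point requiring real care is \emph{properness}, and here the level function settles everything. If $C\subset X$ is compact it meets only finitely many levels, say those $\le L$; since band $n$ lands in $[N+n,N+n+1]$, the preimage $H_e^{-1}(C)$ is contained in a bounded rectangle $[0,1]\times[0,M]$, and being a closed subset of that compact rectangle it is itself compact. I expect the main obstacle to be bookkeeping rather than mathematics: maintaining the compatible parametrizations $\sigma_n$ so that the infinitely many bands genuinely glue, and observing that although the bands \emph{widen} as $n$ grows (the number of squares in band $n$ equals the edge-length of $p_n$, which typically increases), this horizontal growth is irrelevant to properness, which is governed solely by the strictly increasing level in the vertical direction.
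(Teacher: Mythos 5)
Your proposal is correct and is essentially the paper's own argument: the paper defines $H_e$ on $[0,1]\times[0,1]$ as the conjugation 2-cell at $v$ with boundary label $at\phi(a^{-1})t^{-1}$ and then iterates (its band over $[0,1]\times[n,n+1]$ being the concatenation of conjugation 2-cells spelling $\phi^{n}(a)\,t\,\phi^{n+1}(a)^{-1}\,t^{-1}$, with $r_v$ and $r_w$ as the vertical sides), and it proves properness exactly as you do, by noting that $P(C)\subset[-N,N]$ for compact $C$ forces $H_e^{-1}(C)\subset[0,1]\times[0,N]$. Your explicit bookkeeping of the compatible parametrizations $\sigma_n$ simply fills in what the paper delegates to its Figure 1.
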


\begin{proof}
On $[0,1]\times [0,1]$ define $H_e$ to have image the 2-cell at $v$ with boundary label $at\phi(a^{-1})t^{-1}$. Iterate to define $H_e$ as in Figure 1. Note that if $\phi(a)$ has length $L$ then the image of $H_e$ on $[0,1]\times [1,2]$, consists of $L$ conjugation relation 2-cells (each of which is mapped by $P$ to $[1,2]$). 

\vspace {.5in}
\vbox to 2in{\vspace {-2in} \hspace {-.3in}
\hspace{-1 in}
\includegraphics[scale=1]{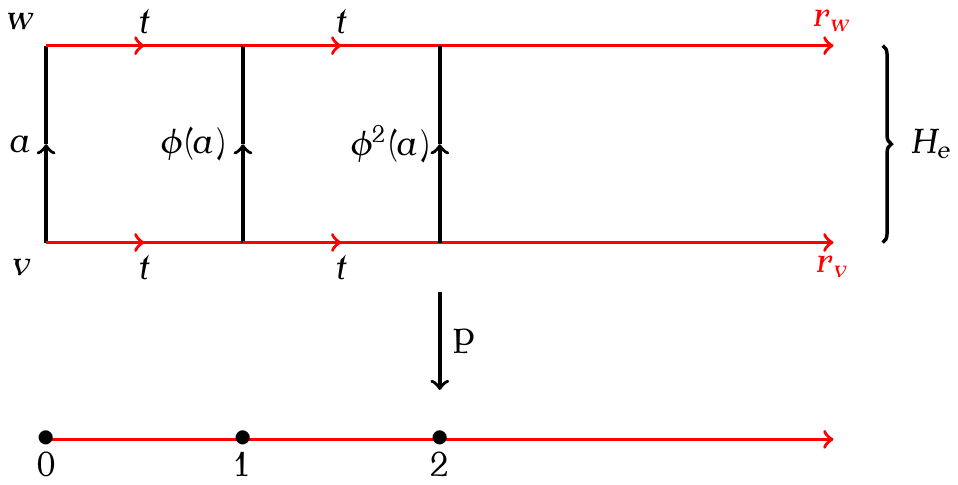}
\vss }


\centerline{Figure 1}

\medskip
To see that $H_e$ is proper, let $C$ be compact in $X$. Then $P(C)\subset [-N,N]$ for some integer $N\geq 0$. But then $H_e^{-1} (C)\subset [0,1]\times [0,N]$. 
\end{proof}

 Recall $\Lambda$ is the Cayley graph of $A$ with respect to $\mathcal A$ and we assume $\ast\in \Lambda\subset X$ where $\ast$ is the identity vertex.
\begin{lemma} \label{straight} 
Suppose $C$ is compact in $X$. There are only finitely many $\mathcal A$-edges $e$ in $\Lambda$ such that the image of $H_e$ (see Lemma \ref{push}) intersects $C$. 
\end{lemma}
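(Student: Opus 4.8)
The plan is to combine the level function $P$ with the defining property of $H_e$ from Lemma \ref{push}, namely $P(H_e([0,1]\times[N,N+1]))\subset[N,N+1]$, in order to confine the part of each pushing map that can meet $C$ to a bounded band of levels, and then to invoke local finiteness of $X$. First I would fix an integer $N_0\geq 0$ with $P(C)\subset[-N_0,N_0]$; such $N_0$ exists because $C$ is compact and $P$ is continuous. Taking the union of the estimate over integers $N\geq N_0+1$ gives $H_e([0,1]\times[N_0+1,\infty))\subset P^{-1}([N_0+1,\infty))$, and this set is disjoint from $C$. Hence if the image of $H_e$ meets $C$ at all, it does so inside the finite strip $K_e:=H_e([0,1]\times[0,N_0+1])$; so it suffices to show that $K_e\cap C\neq\emptyset$ for only finitely many $e$.

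Next I would bound the diameter of $K_e$ uniformly in $e$. Let $L=\max\{|\phi(a)|:a\in\mathcal A\}$, which is finite because $\mathcal A$ is finite. As in the proof of Lemma \ref{push}, the restriction of $H_e$ to $[0,1]\times[n,n+1]$ covers exactly $|\phi^n(a)|$ conjugation $2$-cells, where $a$ is the label of $e$. Since $|\phi^n(a)|\leq L^n$, the subcomplex $K_e$ consists of at most $M:=\sum_{n=0}^{N_0}L^n$ two-cells, a number depending only on $\mathcal P$ and $N_0$. Because $K_e$ is connected and contains the vertex $v=e(0)$, and because the finite presentation $\mathcal P$ gives a uniform bound on the diameter of a single $2$-cell of $X$, every point of $K_e$ lies within edge-path distance $R$ of $v$, where $R=R(\mathcal P,N_0)$ is independent of the particular edge $e$.

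Finally I would run the counting argument. Choose $R'$ with $C\subset B(\ast,R')$ in the edge-path metric. If the image of $H_e$ meets $C$, then $K_e$ contains a point $p\in C$, and the diameter bound gives $d(v,\ast)\leq d(v,p)+d(p,\ast)\leq R+R'$. Thus the initial vertex $v=e(0)$ of every such edge lies in the ball $B(\ast,R+R')$. Since $X$ is locally finite, this ball contains only finitely many vertices, and each vertex of $\Lambda$ is incident to only finitely many edges of $\Lambda$ (at most $2|\mathcal A|$). Therefore only finitely many $\mathcal A$-edges $e$ in $\Lambda$ can have $H_e$ meeting $C$.

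I expect the main obstacle to be the uniform diameter bound of the second step. Although the pushed strip widens as the level increases—it spreads out along the growing words $\phi^n(a)$, so $K_e$ itself has no $e$-independent bound on its number of cells as $N_0\to\infty$—the point is that for the \emph{fixed, bounded} range of levels $0\leq n\leq N_0$ the size of $K_e$, and hence its diameter, is bounded independently of which edge $e$ is pushed. This is exactly where finiteness of $\mathcal A$ (and thus the bound $|\phi(a)|\leq L$) is essential; without it the number of relevant cells, and the reach of $K_e$, could not be controlled uniformly.
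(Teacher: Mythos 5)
Your proposal is correct and follows essentially the same route as the paper: fix $N$ with $P(C)\subset[-N,N]$ so the level-preserving property of $H_e$ forces $H_e([0,1]\times[N+1,\infty))$ to miss $C$, bound the remaining strip uniformly using $|\phi^n(a)|\leq L^n$ (the paper phrases this as $H_e([0,1]\times[0,N])\subset St^{L^N+N}(v)$ rather than via your cell-count-plus-cell-diameter argument, but it is the same estimate), and conclude by local finiteness that only finitely many initial vertices, hence edges, are possible. No gaps; your closing remark correctly identifies where finiteness of $\mathcal A$ is used.
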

\begin{proof}
 If $v\in A$, let $r_v$ be the proper edge path ray at $v$, each of whose edges is labeled $t$. If $e$ is an edge of $\Lambda$ with initial point $v$, let $H_e$ be the proper homotopy of Lemma \ref{push}. For any integers $S>R\geq 0$, $P(H_e([0,1]\times [R,S]))\subset [R,S]$. Say that $P(C)\subset [-N,N]$ for $N\geq 0$. Then for any edge $e$ of $\Lambda$, $$H_e([0,1]\times [N+1,\infty))\cap C=\emptyset$$ 
(since $P(C)\subset [-N,N]$ and $P H_e([0,1]\times [N+1,\infty))\subset [N+1,\infty)$). Let  $L$ be the length of the longest word in $\{\phi(a_1),\ldots, \phi(a_n)\}$. So for any integer $K\geq 0$, the length of the $\mathcal A$-word $H_e([0,1]\times \{K\}$ is $\leq L^K$ (if $e$ has label $a\in\mathcal A$, then $H_e([0,1]\times \{K\}$ has label $\phi^K(a)$).  For any edge $e$ of $\Lambda$ with initial vertex $v$, 
$$H_e([0,1]\times [0,N])\subset St^{L^N+N}(v).$$ 
There are only finitely many vertices $v$ of $\Lambda$ such that $St^{L^N+L}(v)\cap C\ne \emptyset$ and so there are only finitely many edges $e$ of $\Lambda$ such that the image of $H_e$ intersects $C$. 
\end{proof}

\begin{lemma}\label{string}
Suppose $s=(s_0,s_1,\ldots )$ is a proper edge path ray in $\Lambda\subset X$. If $v$ is the initial point of $s$ let  $r_v$ be the edge path at $v$ each of whose edges is labeled $t$, then there is a proper homotopy $H_s:[0,\infty)\times [0,\infty)\to X$ of $s$ to $r_v$ $rel\{v\}$ defined so that $H_s$ restricted to $[N,N+1]\times [0,\infty)$ is $H_{s_N}$ (i.e. $H_s(N+x,y)=H_{s_N}(x,y)$ for all $(x,y)\in [0,1]\times [0,\infty)$).
\end{lemma}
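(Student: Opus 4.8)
The plan is to concatenate the individual homotopies $H_{s_N}$ produced by Lemma \ref{push} along consecutive edges of the ray $s$, and then verify that the resulting map is a well-defined proper homotopy. The formula $H_s(N+x,y) = H_{s_N}(x,y)$ for $(x,y)\in[0,1]\times[0,\infty)$ is forced on us once we decide to glue these pieces; the content of the proof is checking that (i) the pieces agree on their overlaps so that $H_s$ is well-defined and continuous, (ii) the boundary conditions give a homotopy $rel\{v\}$ from $s$ to $r_v$, and (iii) the glued map is proper.

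First I would verify continuity and well-definedness along the seams $\{N\}\times[0,\infty)$. The potential conflict is between $H_{s_{N-1}}$ (restricted to $x=1$) and $H_{s_N}$ (restricted to $x=0$). Writing $v_N$ for the vertex $s_N$ (so $s_{N-1}$ is the edge from $v_{N-1}$ to $v_N$), Lemma \ref{push} gives $H_{s_{N-1}}(1,y)=r_{v_N}(y)$ and $H_{s_N}(0,y)=r_{v_N}(y)$, since the terminal vertex of the edge $s_{N-1}$ is the initial vertex of $s_N$. Thus both pieces restrict to the same ray $r_{v_N}$ along the shared seam, so the pasting lemma yields a continuous map $H_s$ on all of $[0,\infty)\times[0,\infty)$.

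Next I would record the boundary behavior. Along $y=0$ we have $H_s(N+x,0)=H_{s_N}(x,0)=s_N(x)$, so $H_s$ restricted to $[0,\infty)\times\{0\}$ traces out the ray $s$ edge by edge. Along $x=0$ we have $H_s(0,y)=H_{s_0}(0,y)=r_{v}(y)$ where $v=v_0=s_0$ is the initial vertex, giving the other boundary ray $r_v$. The homotopy is $rel\{v\}$ because the corner $H_s(0,0)=v$ is fixed and the left edge $x=0$ is constantly the ray $r_v$ based at $v$; there is no right-hand boundary to worry about since the domain is a quarter-plane and $s$ is an infinite ray.

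**The main obstacle will be** proving that $H_s$ is proper, because properness of the glued map does not follow automatically from properness of each $H_{s_N}$ (infinitely many pieces can conspire to accumulate on a compact set). Here I would exploit the level function $P$ together with Lemma \ref{straight}. Given a compact set $C$, choose $N$ with $P(C)\subset[-N,N]$. By the level-control property $P(H_{s_N}([0,1]\times[N+1,\infty)))\subset[N+1,\infty)$, so $H_s$ meets $C$ only in the strip $[0,\infty)\times[0,N]$; it remains to rule out the horizontal direction. This is exactly the finiteness supplied by Lemma \ref{straight}: only finitely many edges $e$ of $\Lambda$ have $H_e$ meeting $C$, so only finitely many indices $N$ contribute, and hence $H_s^{-1}(C)$ is contained in a bounded rectangle $[0,M]\times[0,N]$ and is therefore compact. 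This combination of vertical control from $P$ and horizontal finiteness from Lemma \ref{straight} is what makes the infinite concatenation proper.
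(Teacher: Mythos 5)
Your proposal is correct and takes essentially the same route as the paper: glue the homotopies $H_{s_N}$ of Lemma \ref{push}, observe the boundary conditions give a homotopy $rel\{v\}$ of $s$ to $r_v$, and invoke Lemma \ref{straight} so that only finitely many pieces meet a given compact set $C$. The only cosmetic difference is that the paper concludes properness by writing $H_s^{-1}(C)$ as a finite union of the compact sets $H_{s_i}^{-1}(C)$ (each $H_{s_i}$ already being proper), whereas you re-derive the vertical bound directly from the level function $P$ and then intersect with your horizontal finiteness to land in a compact rectangle; the two arguments are equivalent.
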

\begin{proof}
Since $H(0,y)=r_v(y)$ and $H(x,0)=s$, $H$ is a homotopy of $r_v$ to $s$ $rel\{v\}$. It remains to show that $H$ is proper. If $C$ is compact in $X$, then by Lemma \ref{straight} there are only finitely many edges $e$ of $s$ such that the image of $H_e$ intersects $C$. Choose $N$ such that for all $n>N$, $H_{s_n}$ avoids $C$. Then $H_s^{-1}(C)=\cup_{i=1}^NH_{s_i}^{-1}(C)$. This last set is a finite union of compact sets since each $H_{s_i}$ is proper. 
\end{proof}

\begin{proof} {\bf (of Proposition \ref{strongss})}
We show that for any integer $N\geq 0$, the group $t^NAt^{-N}$ is semistable at $\infty$ in $X$ ($G$). Let $C$ be compact in $X$. If $v\in A$, let $r_v$ be the proper edge path ray at $v$, each of whose edges is labeled $t$. If $e$ is an edge of $\Lambda$ with initial point $v$, let $H_e$ be the proper homotopy of Lemma \ref{push}. By Lemma \ref{straight} there are only finitely many edges $e$ of $\Lambda$ such that the image of $H_e$ intersects $t^{-N}C$.  Choose $D$ compact such that $D$ contains $t^{-N}C$ and all of these edges. If $s$ and $s'$ are proper $\mathcal A$-rays at $v\in \Lambda-D$ then the proper homotopies $H_s$ and $H_{s'}$ of Lemma \ref{string} both avoid $t^{-N}C$ so that both $s$ and $s'$ are properly homotopic $rel \{v\}$ to $r_v$ by homotopies in $X-t^{-N}C$. Combining $H_s$ and $H_{s'}$ we have $s$ is properly homotopic $rel\{v\}$ to $s'$ by a homotopy $H$ in $X-t^{-N}C$.  Now $t^NH$ is a proper homotopy $rel\{t^Nv\}$ of $t^Ns$ to $t^Ns'$ in $X-C$ and $t^NAt^{-N}$ is semistable at $\infty$ in $X$.
\end{proof}

\section{Ascending HNN extension combinatorics}\label{HNNcomb} 

Suppose $\mathcal A$ is a finite set, $\phi:F(\mathcal A)\to F(\mathcal A)$ is a homomorphism of the free group, $\mathcal R$ is a finite set of words in $F(\mathcal A)$ and $G$ is the (finitely presented) ascending HNN extension with the following HNN presentation:
$$(\ast) \ \ \ \ \ \ \ \ \ \ \ \ \ \mathcal P=\langle t, \mathcal A: \mathcal R, t^{-1}at=\phi(a)\hbox{ for all }a\in \mathcal A\rangle$$
The base group of this HNN extension is $A$, the subgroup of $G$ generated by $\mathcal A$. In this paper, we are only interested in the case when $\mathcal A$ is finite. In order to define what it means for an ascending HNN extension to have bounded depth, we must first understand $ker(p)$ where $p$ is the homomorphism $p:F(\mathcal A)\to A$ (defined by $p(a)=a$ for $a\in \mathcal A$).  Certainly $ker(p)$ contains $N_0(\mathcal R,\phi)\equiv N( \cup _{i=0}^\infty \phi^i (\mathcal R))$, where $N( \cup _{i=0}^\infty \phi^i (\mathcal R))$ is the normal closure of $ \cup _{i=0}^\infty \phi^i (\mathcal R)$ in $F(\mathcal A)$. But it may be that for some word 
$w\in F(\mathcal A)$ and some integer $m$, $\phi^m(w)\in N_0(\mathcal R,\phi)$, and $w\not \in N_0(\mathcal R,\phi)$. Then $w\in ker(p)$. Consider the normal subgroup of $F(\mathcal A)$: 
$$N^{\infty}(\mathcal R,\phi)\equiv \cup _{i=0}^{\infty} \phi^{-i}(N_0(\mathcal R,\phi))\triangleleft F(\mathcal A).$$
It is well known to experts that $\phi^{-i}(N_0(\mathcal R,\phi))< \phi^{-i-1}(N_0(\mathcal R,\phi))$ (see theorem \ref{rel}) so that  $N^{\infty}(\mathcal R,\phi)$ is an ascending union of normal subgroups of $F(\mathcal A)$ and that $N^{\infty}(\mathcal R,\phi)$ is the kernel of $p$, so 
$$A=\langle \mathcal A:N^{\infty}(\mathcal R,\phi)\rangle$$ 

If there is an integer $B$ such that $N^\infty(\mathcal R,\phi)=\cup _{i=0}^B\phi^{-i}(N_0(\mathcal R,\phi))$ then the presentation $\mathcal P$ of $G$ has {\it bounded depth}. Our main theorem shows that if $\mathcal P$ has bounded depth, then $G$ is semistable at $\infty$ (Theorem \ref{mainbd}). It is not always the case that such ascending HNN extensions have bounded depth. (See Theorem \ref{Osin}.) 

As in $\S$\ref{basess}, $P_0:G\to \mathbb Z$ is the homomorphism that kills the normal closure of $A$. If $X$ is the Cayley 2-complex for the presentation $\mathcal P$ of $G$ given in $(\ast)$ (with vertex set $G$), then $P_0$ extends to $P:X\to \mathbb R$. If $g\in G$ and $P_0(g)=N$, $g$ is in {\it level} $N$.  

\begin{remark} \label{loopkill}
An edge path loop in level $L$ of $X$, whose labeling  defines an element of $\cup _{i=0}^B\phi^{-i}(N_0(\mathcal R,\phi))$, is homotopically trivial by a combinatorial homotopy $H$ such that $P(H)$ has image in $(-\infty,L+B]$. Note that if $\alpha$ is an edge path loop in level $L$ labeled by an element of $N(\mathcal R)$ (the normal closure of $\mathcal R$ in $F(\mathcal A)$) then $\alpha$ can be killed by a homotopy in level $L$. 
If $\alpha$ has initial vertex $v$ in level $L$ and labeling $\phi(r)$ for $r\in N(\mathcal R)$, then using only conjugation relations, $\alpha$ is homotopic to an edge path loop at $v$ with labeling $(t^{-1},\beta, t)$ where $\beta$ has labeling $r$ and image in level $L-1$. Since $\beta$ is homotopically trivial in level $L-1$, the loop $\alpha$ can be killed by a homotopy $H$ such that $P(H)$ has image in $[L-1,L]$. This homotopy only uses the homotopy that kills $\beta$ in level $L-1$ and the  conjugation relation 2-cells connecting $\alpha$ and $\beta$. If $\alpha$ has label in $\phi^{-1}(N(\mathcal R))$ (so $\phi(\alpha)=r\in N(\mathcal R)$) then $\alpha$ can be killed by a homotopy $H$ such that $P(H)$ has image in $[L,L+1]$.
\end{remark}

In the case that $A$ is finitely generated and the image of $\phi:A\to A$ is of finite index in $A$, then $A$ is ``commensurated" in $G$ and $G$ is semistable at $\infty$ (see Corollary 4.9 of \cite{CM2}). 

For $\mathcal A$ finite, the group $G=\langle t, \mathcal A: \mathcal R', t^{-1}at=\phi(a) \hbox{ for } a\in \mathcal A\rangle$ (with $\mathcal R' \subset F(\mathcal A)$) is an ascending HNN extension with {\it bounded depth $D$ and root $\mathcal R$} if the kernel of the homomorphism $p:F(\mathcal A)\to A$ (defined by $p(a)=a$ for all $a\in \mathcal A$) is $\phi^{-D}(N_0(\mathcal R,\phi))\equiv \phi^{-D}(N(\cup _{i=0}^\infty \phi^i (\mathcal R)))$ for some finite set of words $\mathcal R$ in $F(\mathcal A)$. In this case, $G$ has finite presentation: $\langle t, \mathcal A:\mathcal R,t^{-1}at=\phi(a) \hbox{ for all } a
\in \mathcal A\rangle$.

\begin{example} \label{G}
R. Grigorchuk (\cite{GR1} and \cite{GR2}) constructed a finitely generated infinite torsion group $G$ of intermediate growth having solvable word problem. He also showed that $G$ was the base group of a finitely presented ascending HNN extension (which is the first example of a finitely presented cyclic extension of an infinite torsion group). I. Lys\" enok \cite{L} produced the following recursive presentation of $G$:
$$G\equiv \langle a,c,d:\sigma^n(a^2), \sigma^n((ad)^4), \sigma^n(adacac)^4), n\geq 0\rangle$$
where $\sigma (a)=aca, \sigma (c)=cd$ and $\sigma (d)=c$. It can be shown that the ascending HNN extension $E$ with presentation: 
$$\langle a,c,d,t: a^2=(ad)^4=(adacac)^4=1, t^{-1}at=aca, t^{-1}ct=dc, t^{-1}dt=c\rangle$$ 
has base group $G$ generated by $\{a,c,d\}$ and $E$ has bounded depth with root $\{a^2,c^2, d^2, (ad)^4, (adacac)^4\}$.
The group $E$ was the first example of a finitely presented amenable but not elementary amenable group. In $\S$5 of \cite{M6}, M. Mihalik shows that $E$ is simply connected at $\infty$. The notion of a finitely generated group being simply connected at $\infty$ is introduced in \cite{M6}, and the group $G$ is shown to be simply connected at $\infty$.
\end{example}

\begin{example}\label{OS} 
A. Ol'shanskii and M. Sapir \cite{OS1} and \cite{OS2} construct a finitely presented ascending HNN extension $\mathcal G$, where the base group $\bar{\mathcal H}$ is a finitely generated infinite torsion group. In contrast to Grigorchuk's group (Example \ref{G}) the base group has finite exponent, and $\mathcal G$ is not amenable (see Theorem 1.1 of \cite{OS1}).  The group $\mathcal G$ has been suggested as a possible non-semistable at $\infty$ group, but it is clear from the equations (5)-(8) in $\S$1.2 of \cite{OS1} that $\mathcal G$ has an ascending HNN presentation with depth one, and so by our main theorem is semistable at $\infty$.  We give a brief summary. A finite set of words $\mathcal R$ is determined in $F_C=\langle c_1,\ldots, c_m\rangle$ a free group of rank $m$.  A monomorphism $\phi:F_C\to F_C$ is defined and $\mathcal R'$ is defined to be $\cup_{i=1}^{\infty}\{\phi^i(r):r\in \mathcal R\}$. The base group of their ascending HNN extension has presentation
$$\bar {\mathcal H}=\langle c_1,\ldots, c_m: \mathcal R\cup \mathcal V\cup \mathcal R'\rangle$$
where $\mathcal V$ is the set of elements $u^n$  for all $u\in F_C$ (and $n$ a fixed large odd number). In particular, $\bar{\mathcal H}$ is an infinite torsion group. A finitely presented ascending HNN extension  of $\bar{\mathcal H}$ has infinite presentation
$$\mathcal G=\langle t, c_1,\ldots, c_m: t^{-1}c_it=\phi(c_i), \mathcal R\cup \mathcal R'\cup \mathcal V\rangle$$
(This follows equation (7) of \cite{OS1}.) Clearly the relations $\mathcal R'$ are a consequence of $\mathcal R$ and the conjugation relations and so can be removed. It is then argued that each relation $v^n$ of $\mathcal V$ is $\phi^{-1} (v')$ where $v'$ is a consequence of $\mathcal R$ and the conjugation relations. In particular, the above presentation of $\mathcal G$ can be reduced to the presentation 
$$\mathcal G=\langle t, c_1,\ldots, c_m: t^{-1}c_it=\phi(c_i), \mathcal R\rangle$$ 
and this presentation has depth 1. It seems unlikely that $\mathcal G$ has an ascending HNN presentation with depth 0. One must wonder if for every integer $N>0$ there are finitely presented ascending HNN groups $\mathcal G_N$ with ascending HNN presentations of depth $N$ but $\mathcal G_N$ does not have such a presentation of depth $N-1$.
\end{example}

\begin{theorem} \label{rel}
Suppose $G$ is the ascending HNN extension with finite presentation: 
$$\mathcal P=\langle t,\mathcal A: \mathcal R, t^{-1}at=\phi(a) \hbox{ for all } a\in \mathcal A\rangle$$
where $\phi:F(\mathcal A)\to F(\mathcal A)$ is a (finite rank) free group homomorphism. Then $A$, the subgroup of $G$ generated by $\mathcal A$, has presentation:
$$A=\langle \mathcal A:N^{\infty}(\mathcal R,\phi)\equiv \cup_{i=0}^\infty\phi^{-i}(N(\cup _{j=0}^\infty \phi^j(\mathcal R)))\rangle.$$ 
Furthermore, we have the relations:
 
\begin{enumerate} 
\item  $\phi^{-i}(N(\cup _{j=0}^\infty \phi^j(\mathcal R)))\subset \phi^{-(i+1)}(N(\cup _{j=0}^\infty \phi^j(\mathcal R))) 
\hbox{ for all }i\geq 0,\hbox{ and}$
\item  $\phi(N^{\infty} (\mathcal R,\phi))\subset N^\infty(\mathcal R,\phi)=\phi^{-1}(N^\infty(\mathcal R,\phi))$
\end{enumerate}
\end{theorem}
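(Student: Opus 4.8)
The plan is to isolate one structural fact about $\phi$ and derive everything from it. Write $N_0:=N(\cup_{j=0}^\infty\phi^j(\mathcal R))$ for the normal closure appearing in the statement. The key observation is that $\phi(N_0)\subseteq N_0$: the generating set $S:=\cup_{j=0}^\infty\phi^j(\mathcal R)$ satisfies $\phi(S)=\cup_{j=1}^\infty\phi^j(\mathcal R)\subseteq S$, and since $\phi$ is a homomorphism it carries a conjugate $gs^{\pm1}g^{-1}$ of an element of $S$ to $\phi(g)\phi(s)^{\pm1}\phi(g)^{-1}$, which lies in $N_0$ because $\phi(s)\in S\subseteq N_0$ and $N_0$ is normal. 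Relation (1) is then immediate: if $\phi^i(w)\in N_0$ then $\phi^{i+1}(w)=\phi(\phi^i(w))\in\phi(N_0)\subseteq N_0$, so $\phi^{-i}(N_0)\subseteq\phi^{-(i+1)}(N_0)$, and in particular $N^\infty(\mathcal R,\phi)$ really is an ascending union. Relation (2) follows by the same bookkeeping: for $w\in N^\infty$ with $\phi^i(w)\in N_0$ one checks $\phi(w)\in\phi^{-(i-1)}(N_0)$ when $i\geq1$ and $\phi(w)\in\phi(N_0)\subseteq N_0$ when $i=0$, giving $\phi(N^\infty)\subseteq N^\infty$; conversely $\phi(w)\in N^\infty$ forces $\phi^{i+1}(w)\in N_0$ for some $i$, hence $w\in\phi^{-(i+1)}(N_0)\subseteq N^\infty$, which is the asserted equality $N^\infty=\phi^{-1}(N^\infty)$.

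For the presentation I would prove $\ker p=N^\infty$ by two inclusions. The inclusion $N^\infty\subseteq\ker p$ is direct. In $G$ the conjugation relations give $\phi(v)=t^{-1}vt$ for every $v\in F(\mathcal A)$, hence $\phi^i(v)=t^{-i}vt^i$; taking $v=r\in\mathcal R$ shows $\phi^j(r)=t^{-j}rt^j=1$ in $G$, so $N_0\subseteq\ker p$. If $\phi^i(w)\in N_0$ then $t^{-i}wt^i=\phi^i(w)=1$ in $G$, whence $w=1$ in $G$; thus every $\phi^{-i}(N_0)$, and so $N^\infty$, lies in $\ker p$.

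The reverse inclusion $\ker p\subseteq N^\infty$ is the main obstacle, and I would handle it with Britton's lemma applied to an auxiliary HNN extension rather than by a direct diagram-stacking argument. Set $\bar A:=F(\mathcal A)/N^\infty$. By Relation (2), $\phi$ descends to $\bar\phi:\bar A\to\bar A$, and the equality $N^\infty=\phi^{-1}(N^\infty)$ makes $\bar\phi$ injective, so $\bar G:=\bar A\ast_{\bar\phi}=\langle t,\mathcal A:N^\infty,\,t^{-1}at=\phi(a)\rangle$ is a genuine ascending HNN extension in which, by Britton's lemma, the base $\bar A$ embeds. It then suffices to identify $\bar G$ with $G$, i.e. to show that in $F(\{t\}\cup\mathcal A)$ the normal closure of $\mathcal R$ together with the conjugation relators equals the normal closure of $N^\infty$ together with those relators. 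One inclusion is trivial since $\mathcal R\subseteq N^\infty$; for the other, let $M$ denote the normal closure of $\mathcal R$ and the conjugation relators and take $w\in N^\infty$, say $\phi^i(w)\in N_0$. Modulo $M$ one has $\phi^i(w)\equiv t^{-i}wt^i$, and the computation above shows $N_0\subseteq M$, so $t^{-i}wt^i\equiv\phi^i(w)\in M$ and hence $w\in M$ by normality. Thus $\bar G=G$, the subgroup generated by $\mathcal A$ is the same in each, and the kernel of $p$ equals the kernel $N^\infty$ of the composite $F(\mathcal A)\to\bar A\hookrightarrow\bar G$. The one point requiring care is that $\bar\phi$ be an honest monomorphism—precisely where Relation (2) is used—so I would prove the two relations first and invoke them in the presentation argument.
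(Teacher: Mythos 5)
Your proof is correct and follows essentially the same route as the paper's: both derive relations (1) and (2) from $\phi(N_0)\subseteq N_0$, pass to the quotient $\bar A=F(\mathcal A)/N^\infty$ (the paper's $A_1$) on which relation (2) makes the induced map an injective endomorphism, identify the ascending HNN extension $\bar A\ast_{\bar\phi}$ with $G$, and conclude via Britton's lemma that $\ker p=N^\infty$. Your only addition is to spell out, via the congruence $t^{-i}wt^i\equiv\phi^i(w)$ modulo the relator subgroup, the step the paper compresses into the assertion that each relation in $N^\infty(\mathcal R,\phi)$ is a consequence of $\mathcal R$ and the conjugation relations.
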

\begin{proof}
Note that 
$$\phi(N(\cup _{j=0}^\infty \phi^j(\mathcal R)))\subset N(\cup _{j=1}^\infty \phi^j(\mathcal R)))\subset N(\cup _{j=0}^\infty \phi^j(\mathcal R))\hbox{ so that}$$ 
$$ N(\cup _{j=0}^\infty \phi^j(\mathcal R)))\subset \phi^{-1}( N(\cup _{j=0}^\infty \phi^j(\mathcal R)))$$
and so relation 1) follows. 

To simplify notation, let $N^\infty=N^\infty(\mathcal R,\phi)$ and $N_i=\phi^{-i}(N(\cup _{j=0}^\infty \phi^j(\mathcal R)))$ for $i\geq 0$, so that $N^\infty=\cup_{i=0}^\infty N_i$ and by 1), $N_i\subset N_{i+1}=\phi^{-1}(N_i)$.
Suppose $a\in \phi^{-1}(N^{\infty})$. Then $\phi(a)\in N^{\infty}$ and so $\phi(a)\in N_i$ for some $i\geq 0$. Then $a\in \phi^{-1}(N_i)=N_{i+1}\subset N^{\infty}$ and we have shown that $\phi^{-1}(N^\infty)\subset N^\infty$.

Next suppose $a\in N^\infty$. Then for some $i\geq 0$, $a\in N_i$. By 1), $a\in N_{i+1}=\phi^{-1}(N_i)\subset \phi^{-1}(N^\infty)$. We have shown that $N^\infty(\mathcal R,\phi)\subset \phi^{-1}(N^\infty(\mathcal R,\phi))$. Combining we have $N^\infty= \phi^{-1}(N^\infty)$ and relation 2) follows. 

Let $A_1$ be the group with presentation $\langle \mathcal A:N^{\infty}(\mathcal R,\phi)\rangle$. To finish the theorem we must show that $A=A_1$. Let $p_1:F(\mathcal A)\to A_1$ (determined by $p_1(a)=a$ for all $a\in \mathcal A$) 
be the quotient homomorphism. By 2), the map $\phi_1:A_1\to A_1$ that extends the map $\phi_1(p_1(a))=p_1(\phi(a))$ for all $a\in \mathcal A$ is a homomorphism. This gives a commutative diagram: 
$$F(\mathcal A){\buildrel \phi\over \longrightarrow}F(\mathcal A)$$
$$\downarrow p_1\ \ \ \ \ \  \downarrow p_1$$
$$ A_1\ \ \ {\buildrel \phi_1\over \longrightarrow}\ \  A_1$$
Next we show that $\phi_1$ is a monomorphism. Suppose $w_1\in ker(\phi_1)$. Let $w\in F(\mathcal A)$ be such that $p_1(w)=w_1$. Then $p_1(\phi(w))=1$ and so $\phi(w)\in ker(p_1)=N^\infty$ and $w\in \phi^{-1}(N^\infty) = N^\infty$. 
Then $w_1=p_1(w)=1\in A_1$ and $\phi_1$ is a monomorphism. 

Consider the ascending HNN extension: 
$$A_1\ast_\phi=\langle t, \mathcal A: N^\infty(\mathcal R,\phi), t^{-1}at=\phi(a)\rangle \hbox{ for all } a\in \mathcal A$$
 with base group $A_1$. Since each relation in $N^\infty(\mathcal R,\phi)$ is a consequence of $\mathcal R$ and the conjugation relations, this group also has presentation $\mathcal P$. By Britton's lemma $A=A_1$.
\end{proof}

Suppose $G$ has finite presentation $\langle t, \mathcal A: \mathcal R, t^{-1}at=\phi(a)\hbox{ for } a\in \mathcal A\rangle$. Here $\phi:F(\mathcal A)\to F(\mathcal A)$ is a homomorphism. Let $N_0\equiv N(\cup _{j=0}^\infty \phi^j(\mathcal R))\triangleleft F(\mathcal A)$,  $N_i\equiv \phi^{-i}(N_0)$ and $A$ be the subgroup of $G$ generated by $\mathcal A$, so that $G$ is the ascending HNN extension, with base $A$ and stable letter $t$.  Let $p:F(\mathcal A)\to A$ be the homomorphism extending the map taking $a$ to $a$ for all $a\in \mathcal A$.

It seems that there is some potential to find a finitely presented group that is not semistable at $\infty$ if one could find a finitely presented ascending HNN extension $\langle t, \mathcal A: \mathcal R, t^{-1}at=\phi(a)\hbox{ for } a\in \mathcal A\rangle $, such that the ascending chain of normal subgroups $N_k$ of $F(A)$ do not stabilize. The following approach gives a general method of constructing infinite depth ascending HNN presentations. In particular, when $A_0$ is a non-Hopfian group and $\phi_0:A_0\to A_0$ is an epimorphism with non-trivial kernel, then there is a corresponding ascending HNN extension with infinite depth. 

\begin{theorem} \label{Osin} 
Suppose the group $A_0$ has finite presentation $\langle \mathcal A:\mathcal R\rangle$ and
$\phi_0:A_0\to  A_0$ is a homomorphism with non-trivial kernel $K_0$ such that the following diagram (with $F({\mathcal A})$ the free group on $\mathcal A$ and $q(a)=a$ for $a\in \mathcal A$) commutes:
$$F(\mathcal A){\buildrel \phi\over \longrightarrow}F(\mathcal A)$$
$$\ \ \downarrow q\ \ \ \ \ \  \downarrow q$$
$$\ \  A_0\ \ {\buildrel \phi_0\over \longrightarrow}\ A_0$$
If the ascending sequence $\{K_i=\phi_0^{-i}(K_0)=ker(\phi_0^{i+1})\}$ of normal subgroups of $A_0$ does not stabilize (in particular when $\phi_0$ is an epimorphism), then the group $G$ with ascending HNN presentation 
$$\mathcal P\equiv \langle t,\mathcal A:\mathcal R, t^{-1}at=\phi(a) \hbox{ for all }a\in \mathcal A\rangle$$
has unbounded depth.
\end{theorem}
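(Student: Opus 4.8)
The plan is to translate the non-stabilization of the kernel chain $\{K_i\}$ in $A_0$ into non-stabilization of the chain $\{N_i = \phi^{-i}(N_0)\}$ in $F(\mathcal A)$. Since these subgroups are ascending (Theorem \ref{rel}), $\cup_{i=0}^B N_i = N_B$, so $\mathcal P$ has unbounded depth precisely when $\{N_i\}$ fails to stabilize. Write $M = N(\mathcal R) = \ker q$, so that $A_0 = F(\mathcal A)/M$. The whole argument rests on a single identity, $N_0 = M$, after which everything is bookkeeping through the correspondence theorem.

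First I would record that the commuting square forces $\phi(M)\subseteq M$: if $q(w)=1$ then $q(\phi(w)) = \phi_0(q(w)) = 1$, so $\phi$ carries $\ker q$ into itself, and hence $\phi^j(M)\subseteq M$ for all $j\geq 0$. Because $\mathcal R\subseteq M$ and $M$ is normal, this gives $\cup_{j\geq 0}\phi^j(\mathcal R)\subseteq M$, whence $N_0 = N(\cup_{j\geq 0}\phi^j(\mathcal R))\subseteq M$; the reverse inclusion $M = N(\mathcal R)\subseteq N_0$ is immediate from the $j=0$ term. Thus $N_0 = M$. This is the one place the hypotheses really get used, and I expect it to be the main point: in a general ascending HNN presentation $N_0$ may properly contain $N(\mathcal R)$, and it is exactly the forward-invariance $\phi(M)\subseteq M$ supplied by the diagram that collapses $N_0$ down to $M$.

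Next I would iterate the diagram to $q\phi^i = \phi_0^i q$ and compute, for each $i\geq 0$,
$$N_i = \phi^{-i}(N_0) = \phi^{-i}(M) = \{w : q(\phi^i(w)) = 1\} = \{w : \phi_0^i(q(w)) = 1\} = q^{-1}(\ker\phi_0^i).$$
Since $\ker\phi_0^0 = 1$ and $\ker\phi_0^i = K_{i-1}$ for $i\geq 1$, the chain $\{N_i\}$ is exactly the $q$-preimage of the chain $1\subseteq K_0\subseteq K_1\subseteq\cdots$. The correspondence theorem makes $q^{-1}$ an order-preserving bijection from subgroups of $A_0$ onto subgroups of $F(\mathcal A)$ containing $M$, so $N_i = N_{i+1}$ if and only if $K_{i-1} = K_i$. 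Hence $\{N_i\}$ stabilizes if and only if $\{K_i\}$ stabilizes; by hypothesis the latter does not, so $\{N_i\}$ does not stabilize and $\mathcal P$ has unbounded depth.

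Finally, to justify the parenthetical claim I would check that an epimorphism $\phi_0$ with $K_0\neq 1$ automatically yields a non-stabilizing chain: given $1\neq x\in\ker\phi_0$ and any $n$, surjectivity of $\phi_0^{n+1}$ provides $y$ with $\phi_0^{n+1}(y) = x$, and then $y\in\ker\phi_0^{n+2}\setminus\ker\phi_0^{n+1}$, so $K_n\subsetneq K_{n+1}$ for every $n$. This is routine; the substantive content of the theorem is the identity $N_0 = M$ together with the translation above.
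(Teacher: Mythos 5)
Your proposal is correct and takes essentially the same route as the paper: both arguments hinge on the identity $N_0=N(\mathcal R)=\ker(q)$ (using $\phi(\ker(q))\subseteq \ker(q)$, which is forced by the commuting square) and then transfer non-stabilization of $\{K_i\}$ to non-stabilization of $\{N_i\}$ via the iterated relation $q\phi^i=\phi_0^i q$, with your surjectivity check for the epimorphism case matching the paper's opening observation. Your formulation $N_i=q^{-1}(\ker(\phi_0^{i}))$ combined with the correspondence theorem is just a uniform (and index-safe) repackaging of the paper's element-chasing step, which instead lifts $a_n\in K_n-K_{n-1}$ to $\bar a_n\in F(\mathcal A)$ and verifies directly that $\bar a_n$ lies in one $N$-term but not the preceding one.
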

\begin{proof}
First observe that if $\phi_0$ is an epimorphism, and $k\in K_0-1$, then there is $k_n$ such that $\phi_0^n(k_n)=k$. In particular, $k_n\in ker(\phi_0^{n+1})-ker(\phi_0^n)$.
Note that $ker(q)=N(\mathcal R)\triangleleft F(\mathcal A)$. If $r\in N(\mathcal R)$, then $q(\phi(r))=1$ and so $\phi(N(\mathcal R))\subset N(\mathcal R)$ and (retaining the notation of Theorem \ref{rel})
$$N_0=N(\cup_{i=0}^\infty\phi^i(\mathcal R))=N(\mathcal R)=ker(q).$$
For the subgroup $A$  of $G$ determined by $\mathcal A$ there is a commutative diagram:
$$F(\mathcal A){\buildrel \phi\over \longrightarrow}F(\mathcal A)$$
$$\downarrow p\ \ \ \ \ \  \downarrow p$$
$$ A\ \ \ {\buildrel \phi_1\over \longrightarrow}\ \  A$$
Observed that $A$ is a quotient of $A_0$ where the element $q(a)$ is mapped to $p(a)$ for all $a\in\mathcal A$ and the following diagram commutes: 
$$A_0\ \ {\buildrel \phi_0\over \longrightarrow}\ \ A_0$$
$$\ \ \ \downarrow q_0\ \ \ \ \ \ \downarrow q_0$$
$$ A\ \ \ {\buildrel \phi_1\over \longrightarrow}\ \  A$$

$(\ast)$ If  $\phi_0$ is an epimorphism, then since $q_0$ is an epimorphism $\phi_1$ is also an epimorphism. In any case, $G =A\ast_{\phi_1}$ and when $\phi_0$ is an epimorphism, $\phi_1$ is an isomorphism.


Let $N_i=\phi^{-i}(N_0)\triangleleft F(\mathcal A)$. By Theorem \ref{rel}.1 $N_{i-1}\leq N_i$. For $i>0$ we show $N_i\ne N_{i-1}$ when $K_i\ne K_{i-1}$, so that $\mathcal P$ has unbounded depth when $\{K_i\}$ does not stabilize. Choose $a_n\in K_n-K_{n-1}$. 
Choose $\bar a_n\in F(\mathcal A)$ such $q(\bar a_n)=a_n$. Then 
$$q(\phi^{n-1}(\bar a_n))=\phi_0^{n-1}q(\bar a_n)=\phi_0^{n-1}(a_n)\ne 1$$
so $\phi^{n-1}(\bar a_n)\not\in N_0=ker(q)$ and $\bar a_n\not\in N_{n-1}$. But, 
$$q\phi^{n}(\bar a_n)=\phi_0(q(\phi^{n-1}(\bar a_n)))=\phi_0(a)=1$$
so $\phi^n(\bar a_n)\in ker(q)=N_0$ and $\bar a_n\in N_n-N_{n-1}$. 
\end{proof}

\begin{example} When $A_0$ is non-Hopfian and $\phi_0$ maps $A_0$ onto $A_0$ with non-trivial kernel, Theorem \ref{Osin} produces a corresponding ascending HNN extension with unbounded depth. 

Let  $A_0=BS(2,3)=\langle a,b:b^{-1}a^2b=a^3\rangle$, and $\phi:F(\{a,b\})\to F(\{a,b\})$ by $a\to a^2$ and $b\to b$,  observe that $\phi^i([b^{-i}ab^i,a])=[b^{-i}a^{2^i}b^i,a^{2^i}]\approx [a^{3^i},a^{2^i}]=1$, so that $[b^{-i}ab^i,a]\in N_i$. If $[b^{-i}ab^i,a]\in N_{i-1}$ then $\phi^{i-1}([b^{-i}ab^i,a])\in N_0$ where  $N_0=N(b^{-1}a^2ba^{-3})\triangleleft F(\{a,b\}))$. But 
$$\phi^{i-1}([b^{-i}ab^i,a])=[b^{-i}a^{2^{i-1}}b^{i}, a^{2^{i-1}}]\approx [b^{-1}a^{3^{i-1}}b,a^{2^{i-1}}]$$ a reduced word of syllable length $8$ in (the HNN extension) $\langle a,b:b^{-1}a^2=a^3\rangle$. In particular, the following ascending HNN extension presentation with stable letter $t$ and base group generated by $\{a,b\}$ has infinite depth:
$$\langle t,a,b:b^{-1}a^2b=a^3, t^{-1}at=a^2, t^{-1}bt=b\rangle.$$
\end{example}


Since $\phi_1$ is an isomorphism (see $(\ast)$), $\langle A\rangle=\langle a,b\rangle$ is normal in $G$ and the main theorem of M. Mihalik's paper \cite{M1} implies $G$ is semistable at $\infty$. So this particular approach cannot yield a non-semistable at $\infty$ ascending HNN extension of unbounded depth when $\phi_0$ is an epimorphism.

The remainder of this section is of general interest in understanding presentations of ascending HNN extensions, but not important to the proof of our main theorem.

\begin{remark} \label{R2} 
Consider a homomorphisms $\phi: F(\mathcal A)\to F(\mathcal A)$ for $\mathcal A$ finite where $\phi$ has non-trivial kernel. One might wonder if it is possible to have a such a homomorphism so that (even with $\mathcal R=\emptyset$), the presentation $\langle t, \mathcal A: t^{-1}at=\phi(a) \hbox{ for } a\in \mathcal A\rangle$  does not have finite depth? I.e. is it possible that the ascending collection of normal subgroups of $F(\mathcal A) $ defined by $N_k=\langle \cup _{i=1}^kker(\phi^{i})\rangle$ does not stabilize? The answer is no.

Consider the sequence $F(\mathcal A)\to \phi(F(\mathcal A))\to \phi^2(\mathcal F(\mathcal A))\to \cdots$ of epimorphisms where each map is $\phi$. For $i>0$,  $\phi^i(F(\mathcal A))$ is a free group of rank $\leq rank(\phi^{i-1}(F(\mathcal A)))$. So, for some integer $m\geq 0$, $rank (\phi^m(F(\mathcal A)))=rank (\phi^{m+1}(F(\mathcal A)))$. As finitely generated free groups are Hopfian, the epimorphism $\phi:\phi^m(F(\mathcal A))\to \phi^{m+1}(F(\mathcal A))$ is an isomorphism and $ker(\phi^m)=ker(\phi^{m+1})$.
\end{remark} 

Next we show that any homomorphism $\phi :F(\mathcal A)\to F(\mathcal A)$ defining an ascending HNN extension can be replaced by a monomorphism. 

\begin{lemma} 
Suppose $\mathcal A$ is a finite set, $\mathcal R$ is a finite subset of the free group $F(\mathcal A)$ and $\phi:F(\mathcal A)\to F(\mathcal A)$ is a homomorphism. Then there is a finite set $\mathcal B$, a finite set $\mathcal R'\subset F(\mathcal B)$,  a monomorphism $\phi':F(\mathcal B)\to F(\mathcal B)$ and an isomorphism of ascending HNN extensions:
$$\langle t,\mathcal A: \mathcal R, t^{-1}at=\phi(a)\hbox{ for }a\in \mathcal A\rangle{\buildrel \rho\over \longrightarrow } \langle t,\mathcal B: \mathcal R', t^{-1}bt=\phi'(b) \hbox{ for } b\in \mathcal B\rangle$$

Furthermore,  if 
$$q_{\mathcal A}:F(\mathcal A\cup \{t\})\to  \langle t,\mathcal A: \mathcal R, t^{-1}at=\phi(a)\hbox{ for }a\in \mathcal A\rangle\hbox{  and}$$ 
        $$q_{\mathcal B}:F(\mathcal B\cup \{t\})\to \langle t,\mathcal B: \mathcal R', t^{-1}bt=\phi'(b) \hbox{ for } b\in \mathcal B\rangle$$ 
are the natural projections, then 
there is a epimorphism 
$$\rho':F(\mathcal A\cup \{t\})\to F(\mathcal B\cup \{t\})\hbox{ such that:}$$

1) $\rho'(t)=t$

2) $\rho' \circ q_{\mathcal B}=q_{\mathcal A}\circ \rho$ and 

3) $\rho'(\mathcal R)=\mathcal R'$,  (for $N_G(\mathcal R)$ the normal closure of $\mathcal R$ in $G$) 

$\ \ \ \rho'(N_{F(\mathcal A)}(\mathcal R))= N_{F(\mathcal B)}(\mathcal R')$ and $\rho'(N_{F(\mathcal A\cup \{t\})}(\mathcal R))= N_{F(\mathcal B\cup \{t\})}(\mathcal R')$

\noindent In particular, the following diagram commutes:

$$ F(\mathcal A\cup \{t\}) {\buildrel \rho'\over \longrightarrow}  F(\mathcal B\cup \{t\})$$
$$ \downarrow q_{\mathcal A} \ \ \ \ \ \ \ \ \ \ \ \ \ \    \downarrow q_{\mathcal B}$$
$$\langle t,\mathcal A: \mathcal R, t^{-1}at=\phi(a)\rangle {\buildrel \rho\over \longrightarrow}   \langle t,\mathcal B: \mathcal R', t^{-1}bt=\phi'(b)\rangle$$
(Basically $\rho$ is conjugation by $t^m$ for some $m\geq 0$.)
\end{lemma}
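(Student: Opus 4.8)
The plan is to produce $\mathcal{B}$, $\phi'$, $\mathcal{R}'$ and $\rho'$ explicitly and to recognize $\rho$ as essentially the map ``apply $\phi^m$'', which inside the HNN extension is conjugation by $t^m$. First I would invoke the Hopfian argument of Remark \ref{R2}: since the ranks of the free groups $\phi^i(F(\mathcal{A}))$ are nonincreasing, there is an $m\ge 0$ with $\ker(\phi^m)=\ker(\phi^{m+1})$. Set $H:=\phi^m(F(\mathcal{A}))$, a free group of finite rank. Then $\phi(H)=\phi^{m+1}(F(\mathcal{A}))\subseteq\phi^m(F(\mathcal{A}))=H$, and $\phi|_H$ is injective: if $h=\phi^m(x)\in H$ and $\phi(h)=1$ then $x\in\ker(\phi^{m+1})=\ker(\phi^m)$, so $h=1$. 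Choose a free basis $\mathcal{B}$ of $H$, identify $F(\mathcal{B})=H\le F(\mathcal{A})$, and put $\phi':=\phi|_H\colon F(\mathcal{B})\to F(\mathcal{B})$, a monomorphism, together with $\mathcal{R}':=\phi^m(\mathcal{R})\subseteq H=F(\mathcal{B})$. Both $\mathcal{B}$ and $\mathcal{R}'$ are finite.

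Next I would define $\rho'\colon F(\mathcal{A}\cup\{t\})\to F(\mathcal{B}\cup\{t\})$ by $\rho'(t)=t$ and $\rho'(a)=\phi^m(a)$ for $a\in\mathcal{A}$; equivalently $\rho'|_{F(\mathcal{A})}=\phi^m\colon F(\mathcal{A})\twoheadrightarrow H=F(\mathcal{B})$. This is an epimorphism because $\langle\phi^m(\mathcal{A}),t\rangle=\langle H,t\rangle=F(\mathcal{B}\cup\{t\})$, which gives condition 1). To obtain $\rho$ I would check that $\rho'$ kills the relators of $G_{\mathcal A}:=\langle t,\mathcal{A}:\mathcal{R},t^{-1}at=\phi(a)\rangle$ modulo those of $G_{\mathcal B}:=\langle t,\mathcal{B}:\mathcal{R}',t^{-1}bt=\phi'(b)\rangle$. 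For $r\in\mathcal{R}$, $\rho'(r)=\phi^m(r)\in\mathcal{R}'$ is a defining relator, so it dies in $G_{\mathcal B}$; for a conjugation relation, $\rho'(t^{-1}at)=t^{-1}\phi^m(a)t=\phi(\phi^m(a))=\phi^{m+1}(a)=\rho'(\phi(a))$ in $G_{\mathcal B}$, using that $t^{-1}\beta t=\phi'(\beta)$ holds for every $\beta\in F(\mathcal{B})=H$. Hence $\rho'$ descends to $\rho\colon G_{\mathcal A}\to G_{\mathcal B}$, which is exactly the commuting square of condition 2). The three equalities in condition 3) then follow formally from surjectivity of $\rho'$ (and of $\phi^m\colon F(\mathcal{A})\to F(\mathcal{B})$) together with the general fact that a surjective homomorphism carries the normal closure of a set onto the normal closure of its image.

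The real content is showing $\rho$ is an isomorphism, and this is where I expect the only subtlety. Surjectivity is immediate from surjectivity of $\rho'$. For injectivity I would build a homomorphism $\sigma\colon G_{\mathcal B}\to G_{\mathcal A}$ from the inclusion $F(\mathcal{B})=H\hookrightarrow F(\mathcal{A})$ together with $t\mapsto t$; this is well defined because $\phi^m(r)\in\ker p=N^{\infty}(\mathcal{R},\phi)$ for $r\in\mathcal{R}$ (Theorem \ref{rel}(2) gives $\phi(N^{\infty})\subseteq N^{\infty}$) and because $t^{-1}bt=\phi(b)=\phi'(b)$ holds in $G_{\mathcal A}$ for $b\in\mathcal{B}\subseteq F(\mathcal{A})$. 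Computing on generators, $\sigma\rho(t)=t$ and $\sigma\rho(a)=\phi^m(a)=t^{-m}at^m$ in $G_{\mathcal A}$, so $\sigma\rho$ is conjugation by $t^m$; symmetrically $\rho\sigma$ is conjugation by $t^m$ in $G_{\mathcal B}$, where one uses $t^{-m}bt^m=(\phi')^m(b)=\phi^m(b)$ and the $\phi$-invariance of $H$. Both composites are inner automorphisms, hence bijections, so $\rho$ is injective and therefore an isomorphism; this is precisely the sense in which ``$\rho$ is conjugation by $t^m$''. The one place demanding care is the choice of $m$ and the verification that $\phi|_H$ is injective with $\phi(H)\subseteq H$, since everything downstream — that $\phi'$ is a monomorphism and that the conjugation relations of $G_{\mathcal B}$ are internally consistent — rests on it.
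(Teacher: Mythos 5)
Your proposal is correct, and its constructive skeleton coincides exactly with the paper's: both invoke the Hopfian stabilization of Remark \ref{R2} to get $m\geq 0$ with $\phi$ injective on $H=\phi^m(F(\mathcal A))$, take $\mathcal B$ a finite basis of $H$, $\phi'=\phi|_H$, $\mathcal R'=\phi^m(\mathcal R)$, and $\rho'(a)=\phi^m(a)$, $\rho'(t)=t$, with conditions 1)--3) following as you say. Where you genuinely diverge is the one substantive step, injectivity of $\rho$. The paper argues on kernels: given $w\in \ker(\rho\circ q_{\mathcal A})$, it notes the $t$-exponent sum of $w$ is zero, conjugates by a suitable power $t^j$, writes $w=(t^{-n_1}w_1t^{n_1})\cdots (t^{-n_s}w_st^{n_s})$, replaces it by $\bar w=\phi^{n_1}(w_1)\cdots \phi^{n_s}(w_s)\in F(\mathcal A)$ with the same $q_{\mathcal A}$-image, then uses Theorem \ref{rel} on the $\mathcal B$-side to place $\phi^m(\bar w)$ in $(\phi')^{-k}(N(\cup_{i=0}^\infty(\phi')^i(\mathcal R')))$ and translates back through $\phi^m$ to get $\bar w\in N^{\infty}(\mathcal R,\phi)=\ker(q_{\mathcal A}|_{F(\mathcal A)})$. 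You instead build the explicit homomorphism $\sigma$ induced by the inclusion $H\hookrightarrow F(\mathcal A)$ and $t\mapsto t$ (well defined, as you verify, since $\phi^m(r)$ dies in the $\mathcal A$-side group and $t^{-1}\beta t=\phi(\beta)$ holds there for every word $\beta$), and check that $\sigma\rho$ and $\rho\sigma$ are conjugation by $t^m$ on the respective groups; being inner automorphisms, they force $\rho$ injective, and surjectivity comes from $\rho'$. Your route buys a shorter, more conceptual argument: it eliminates the fussiest part of the paper's proof (the choice of $j$ and the normal-form bookkeeping producing $\bar w$), it uses Theorem \ref{rel} only in a routine well-definedness check --- which could even be replaced by the identity $\phi^m(r)=t^{-m}rt^m$ in the $\mathcal A$-side group --- and it turns the paper's parenthetical ``$\rho$ is basically conjugation by $t^m$'' into the literal mechanism of the proof (and shows $\sigma$ is itself an isomorphism). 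What the paper's kernel chase buys in exchange is an explicit location of elements of $\ker(\rho\circ q_{\mathcal A})$ within the filtration $\phi^{-k}(N_0(\mathcal R,\phi))$, staying inside the $N^{\infty}$ framework that drives the rest of Section 4; but for the lemma as stated your argument is complete, and I see no gap in it.
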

\begin{proof}
Since free groups are Hopfian, there is an integer $m\geq 0$ such that $\phi:\phi^m(F(\mathcal A))\to \phi^{m+1}(F(\mathcal A))$ is an isomorphism (see Remark \ref{R2}). Let $\mathcal B$ be a finite set of free generators for $\phi^m(F(\mathcal A))$ (so $F(\mathcal B)\equiv \phi^m(F(\mathcal A))$) and let $\phi':F(\mathcal B)\to F(\mathcal B)$ be defined so that $\phi'(b)$ is a $\mathcal B$-word for $\phi(b)$ for each $b\in \mathcal B$. Note that $\phi'$  is a monomorphism, since $\phi:\phi^m(F(\mathcal A))\to\phi^{m+1}(F(\mathcal A))<F(\mathcal B)$ is a monomorphism.

Define $\rho':F(\mathcal A\cup \{t\})\to F(\mathcal B\cup \{t\})$ such that $\rho'(t)=t$ and $\rho'(a)=\phi^m(a)$ for all $a\in \mathcal A$. Note that $\rho'$ is an epimorphism. Let $\mathcal R'=\phi^m(\mathcal R)$ (written as $\mathcal B$-words) and then 3) holds. Since $\rho'$ of each relation of $\langle t,\mathcal A: \mathcal R, t^{-1}at=\phi(a)\rangle$ is a relator of $\langle t,\mathcal B: \mathcal R', t^{-1}bt=\phi'(b)\rangle$, the homomorphism  $\rho$ can be defined so that 2) holds. Since $\rho'$ is an epimorphism, $\rho$ is an epimorphism. (Basically, $\rho$ is conjugation by $t^m$.) 

    To show $\rho$ is an isomorphism, it remains to show that if $w\in ker(\rho q_{\mathcal A})$ then $w\in ker (q_{\mathcal A})$ (i.e. $\rho$ is a monomorphism). First observe that the exponent sum of $t$ in $w$ is zero. Next observe that, $w\in ker(\rho q_{\mathcal A})$ (respectively $w\in ker(q_{\mathcal A})$) iff $t^{-j}wt^j\in ker(\rho q_{\mathcal A})$ (respectively $t^{-j}wt^j\in ker( q_{\mathcal A})$) for every integer $j\geq 0$. Select a positive integer $j$ such that any initial segment of $t^{-j}wt^j$ has $t$-exponent sum $\leq 0$. In $F(\mathcal A\cup\{t\})$, $w=(t^{-n_1}w_1t^{n_1})\cdots (t^{-n_s}w_st^{n_s})$ where $n_i\geq 0$ and each $w_i\in F(\mathcal A)$. Let $\bar w \equiv \phi^{n_1}(w_1)\cdots \phi^{n_s}(w_s)(\in F(\mathcal A))$. Now, $q_{\mathcal A}(w)=q_{\mathcal A}(\bar w)$ and $\bar w\in ker(q_{\mathcal B}\rho')$. Note that $\rho'(\bar w)=\phi^m(\bar w)\in ker(q_{\mathcal B}) (<F(\mathcal B))$.
By Theorem \ref{rel}, $\phi^m(\bar w)\in (\phi')^{-k}(N(\cup_{i=0}^{\infty}(\phi')^i(\mathcal R')))$ for some integer $k\geq 0$. 
By 3) we have,  $\phi^m(\bar w)\in \phi^{-k}(N(\cup_{i=0}^{\infty}\phi^i(\phi^m(\mathcal R))))$ and so $\bar w\in  \phi^{-k-m}(N(\cup_{i=m}^{\infty}\phi^i(\mathcal R)))$. By Theorem \ref{rel}, $\bar w$ (and hence $w$) is an element of $ker (q_{\mathcal A})$. 
\end{proof}

  \section{Bounded Depth HNN extensions  are semistabile at $\infty$}\label{Smain}

The group $G$ is an ascending HNN extension of a finitely generated group $A$ and $G$ has bounded depth.   
We use the notation of $\S$\ref{basess}. Let $\mathcal A=\{a_1,\ldots , a_n\}$ be a finite generating set for $A$ and 
$$\mathcal P\equiv \langle t,\mathcal A:\mathcal R,t^{-1}at=\phi(a) \hbox { for all } a\in \mathcal A\rangle$$
a finite presentation for $G$, where each element of $\mathcal R$ is an $\mathcal A$-word. 
Let $X$ be the Cayley 2-complex for this presentation, and $\Lambda$ be the Cayley graph of $A$ with generating set $\mathcal A$. We assume $\ast\in \Lambda\subset X$ where $\ast$ is the identity vertex for $X$.  We must show condition (2) of Theorem \ref{GGM} is satisfied for each compact set $C$ in $X$. 
We will show that there is an integer $N(C)\geq 0$ (defined in Lemma \ref{below}) such that $t^NAt^{-N}$ is co-semistable at $\infty$ in $X$ with respect to $C$. This requires that we find a compact set $D(C)$ such that loops in $X-(t^NAt^{-N} ) D(C)$ can be pushed to infinity by proper homotopies in $X-C$. In every instance $D(C)$ will have the form $t^{N(C)}\{\ast, t^{-1},\ldots ,t^{-M}\}$ for some integer $M$ that depends on $C$ and the depth of the presentation $\mathcal P$ for $G$. 

\begin{remark} \label{stcoax} 
In the case that $A$ is finitely presented, it is interesting to note that our proof will show that for our choice of $D(C)$, each loop in $X-(t^{-N}At^N)D$ is homotopically trivial in $X-(t^{-N}At^N) D$ (see Theorem \ref{FP}). This sort of behavior is related to the main theorems of \cite{W92}, \cite{GGM16} and \cite{GG12}, and is called {\it strongly coaxial} when $A$ is infinite cyclic.
\end{remark} 

Recall that $P:X\to \mathbb R$ is such that for each vertex $v\in G\subset X$, $P(v)$ is the exponent sum of $t$ in $v$ and we say $v$ is in {\it level} $P(v)$.
 The next lemma is a direct consequence of the normal form for elements of $G$ (each element $g\in G$ has the form $t^nat^{-m}$ for some $n, m\geq 0$ and $a\in A$). 
 \begin{lemma} \label{below} 
Suppose $C$ is a finite subcomplex of $X$. For each vertex $v\in C$, write 
$$v=t^{n(v)}a_vt^{-m(v)}\hbox{ for }a_v\in A\hbox{ and }n(v), m(v)\geq 0, \ and$$ 
$$N(C)=max\{n(v):v\in C\}\hbox{ and }M(v,C)=N(C)-n(v)+m(v)(\geq 0).$$  Then $vt^{M(v,C)}\in t^{N(C)}A$. 

\noindent Note that by definition, $N(C)-M(v,C)=n(v)-m(v)=P(v)$.  
 \end{lemma}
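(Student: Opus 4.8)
The plan is to treat this as a direct unwinding of the normal form quoted immediately before the statement, so almost all of the work is bookkeeping with $t$-exponents. First I would record that, because $C$ is a finite subcomplex, it has only finitely many vertices, so $N(C)=\max\{n(v):v\in C\}$ is a well-defined nonnegative integer, and for each vertex $v$ of $C$ the representation $v=t^{n(v)}a_vt^{-m(v)}$ (with $a_v\in A$ and $n(v),m(v)\ge 0$) is available by the normal form. Next I would verify the parenthetical claim $M(v,C)\ge 0$: since $N(C)\ge n(v)$ by definition of the maximum and $m(v)\ge 0$, we get $M(v,C)=N(C)-n(v)+m(v)\ge 0$.

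The heart of the argument is the membership $vt^{M(v,C)}\in t^{N(C)}A$. I would substitute the normal form and cancel the powers of $t$ on the right:
$$vt^{M(v,C)}=t^{n(v)}a_vt^{-m(v)}t^{M(v,C)}=t^{n(v)}a_vt^{N(C)-n(v)},$$
using $-m(v)+M(v,C)=N(C)-n(v)$. Setting $k=N(C)-n(v)\ge 0$, I would then push the remaining positive power of $t$ to the left by the iterate $t^{-k}at^k=\phi^k(a)$ of the conjugation relation $t^{-1}at=\phi(a)$, giving
$$vt^{M(v,C)}=t^{n(v)+k}\bigl(t^{-k}a_vt^k\bigr)=t^{N(C)}\phi^{k}(a_v).$$
Since $k\ge 0$ and $\phi$ maps $A$ into $A$, we have $\phi^{k}(a_v)\in A$, so $vt^{M(v,C)}\in t^{N(C)}A$, as required. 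The note is then immediate: elements of $A$ have $t$-exponent sum $0$, so $P(v)=n(v)-m(v)$, and $N(C)-M(v,C)=N(C)-\bigl(N(C)-n(v)+m(v)\bigr)=n(v)-m(v)=P(v)$.

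The only point requiring any care---and the single place the argument could go wrong---is the sign of the exponent $k$. Because $\phi$ is only a monomorphism $A\to A$ and not an automorphism, the identity $t^{-k}at^k=\phi^k(a)$ lands in $A$ only for $k\ge 0$; this is exactly why $N(C)$ is chosen as the \emph{maximum} of the $n(v)$, which guarantees $k=N(C)-n(v)\ge 0$. Everything else is routine arithmetic with the normal form, so I do not anticipate a genuine obstacle.
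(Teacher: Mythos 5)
Your proposal is correct and follows essentially the same route as the paper: both arguments amount to inserting $t^{-k}t^{k}$ with $k=N(C)-n(v)\ge 0$ and using the conjugation identity $t^{-k}a_vt^{k}=\phi^{k}(a_v)\in A$ (the paper writes this as $v=t^{N(C)}a'_vt^{-M(v,C)}$ with $a'_v=t^{n(v)-N(C)}a_vt^{N(C)-n(v)}\in\phi^{N(C)-n(v)}(A)<A$). Your explicit remark about why the sign of $k$ matters---$\phi$ being only a monomorphism, which is exactly why $N(C)$ is taken as the maximum---is the same point the paper makes implicitly, so nothing is missing.
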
 
 \begin{proof}
 For $v\in C$, 
 $$v=t^{N(C)}(t^{n(v)-N(C)}a_vt^{N(C)-n(v)})t^{-M(v,C)}.$$ 
 If $a'_v=t^{n(v)-N(C)}a_vt^{N(C)-n(v)}(\in \phi^{N(C)-n(v)}(A)<A)$ then $vt^{M(v,C)}=t^{N(C)}a'$. 
 \end{proof}
 Geometrically this say that for each vertex $v$ of $C$, the edge path at $v$ with each edge labeled $t$ and of length $M(v,C)$ ends in $t^{N(C)}A$. 

\begin{lemma} \label{below2} 
Suppose $C$ is a finite subcomplex of $X$. Let  
 $$M(C)=max\{M(v,C): v\hbox{ is a vertex of } C\}.$$
Then for each vertex $v\in C$   
$$v\in (t^{N(C)}At^{-N(C)})(t^{N(C)}\{\ast, t^{-1},\ldots ,t^{-M(C)}\}) \ \hbox{and}$$ 
for positive integers $M,N$ and $w\in (t^{N}At^{-N})(t^{N}\{\ast, t^{-1},\ldots ,t^{-M}\})$ we have 
$$wA\subset (t^{N}At^{-N})(t^{N}\{\ast, t^{-1},\ldots ,t^{-M}\}).$$
\end{lemma}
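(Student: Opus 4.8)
The plan is to extract from the conjugation relations the single structural fact that drives everything, namely that the subgroups $t^kAt^{-k}$ form an \emph{ascending} chain as $k$ increases. Since $t^{-1}at=\phi(a)\in A$ for every $a\in\mathcal A$, we have $t^{-1}At\subseteq A$, and conjugating by $t$ gives $A\subseteq tAt^{-1}$; hence $t^kAt^{-k}\subseteq t^{k+1}At^{-(k+1)}$ for all $k\ge 0$. In particular $t^iAt^{-i}\subseteq t^kAt^{-k}$ whenever $0\le i\le k$, and $A\subseteq t^kAt^{-k}$ for every $k\ge 0$. I will also use repeatedly that for any integer $\ell\ge 0$ the conjugate $t^{-\ell}at^{\ell}$ equals $\phi^{\ell}(a)\in A$.

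For the first assertion, fix a vertex $v\in C$. Lemma~\ref{below} gives $vt^{M(v,C)}=t^{N(C)}a'_v$ with $a'_v\in A$, so $v=t^{N(C)}a'_v t^{-M(v,C)}$. I will then peel off a conjugate of $A$ by rewriting this as $v=\bigl(t^{N(C)}a'_v t^{-N(C)}\bigr)\,t^{\,N(C)-M(v,C)}$. The first factor lies in $t^{N(C)}At^{-N(C)}$, and since $0\le M(v,C)\le M(C)$ the second factor $t^{\,N(C)-M(v,C)}=t^{N(C)}t^{-M(v,C)}$ lies in $t^{N(C)}\{\ast,t^{-1},\dots,t^{-M(C)}\}$. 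This exhibits $v$ in the claimed product set.

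For the second assertion, write $w=h\,t^{\,N-j}$ with $h\in t^NAt^{-N}$ and $0\le j\le M$, and take an arbitrary $a\in A$. The idea is to commute $a$ past the power of $t$, writing $wa=h\,(t^{\,N-j}at^{-(N-j)})\,t^{\,N-j}$. Setting $c=t^{\,N-j}at^{-(N-j)}$, the crux is to check $c\in t^NAt^{-N}$. When $N-j\ge 0$ this follows because $c\in t^{\,N-j}At^{-(N-j)}\subseteq t^NAt^{-N}$ by the ascending chain; when $N-j<0$ one has $c=\phi^{\,j-N}(a)\in A\subseteq t^NAt^{-N}$. In either case $hc\in t^NAt^{-N}$, and $t^{\,N-j}=t^Nt^{-j}$ lies in $t^N\{\ast,\dots,t^{-M}\}$, so $wa$ lies in the product set; letting $a$ range over $A$ gives $wA$ inside the product set.

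The only genuinely delicate point is the sign bookkeeping in the second assertion: because $j$ may exceed $N$, the exponent $N-j$ can be negative, and one must verify that conjugating $a\in A$ by a negative power of $t$ still lands back inside $t^NAt^{-N}$ — this is exactly where the property $\phi(A)\subseteq A$ (so that $\phi^{\,j-N}(a)\in A$) together with the base case $A\subseteq t^NAt^{-N}$ are used. Everything else is a direct manipulation using the ascending chain established in the first paragraph.
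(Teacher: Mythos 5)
Your proof is correct and takes essentially the same route as the paper: part one is read off from Lemma \ref{below}, and part two absorbs right multiplication by $A$ into $t^{N}At^{-N}$ by conjugating across the power of $t$, with the key fact being $t^{-m}At^{m}=\phi^{m}(A)\subseteq A$. The only (cosmetic) difference is your case split on the sign of $N-j$, which is avoidable --- since $t^{N-j}at^{-(N-j)}=t^{N}\phi^{j}(a)t^{-N}\in t^{N}At^{-N}$ in one step, the ascending-chain observation is not actually needed; the paper instead writes $w=t^{N}at^{-m}$ and slides $A$ across $t^{-m}$ directly via $wA\subset t^{N}a(t^{-m}At^{m})t^{-m}\subset t^{N}At^{-m}$.
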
   
\begin{proof} The first conclusion follows from Lemma \ref{below}.  Note that $w=t^{N}at^{-m}$ for some $a\in A$ and $m\in \{0,\ldots, M\}$.  

Then $wA\subset t^{N}a(t^{-m}At^m)t^{-m}$ and as $t^{-m}At^m\subset A$:
$$wA\subset  t^{N}At^{-m}\subset (t^{N}At^{-N})(t^{N}\{\ast, t^{-1},\ldots ,t^{-M}\}).$$

\end{proof}

 
For integers $N,M\geq 0$ define $ D(N,M)\equiv t^NA\{\ast,t^{-1},\ldots, t^{-M}\}$. If $C$ is compact in $X$, and $B$ is the bounded depth of our ascending HNN presentation $\mathcal P$ we will use the set $D(N(C),M(C)+B+1)$ to play the roll of the compact set $D$ in $X$ and $t^{N(C)}At^{-N(C)}$ to play the roll of $J$ when applying Theorem \ref{GGM}. First we must understand the set $(t^NAt^{-N})D(N,M)=t^NA\{\ast, t^{-1},\ldots, t^{-M}\}$ and a few geometric definitions will help. If $v,w\in G$, we say the coset $wA$ is {\it $n$ levels directly below} $vA$ if there is an edge path of length $n$ with each edge labeled $t$ from a vertex of $wA$ to a vertex of $vA$. Note that if $wA$ is $n$ levels directly below $vA$ then for every vertex $u$ of $wA$, the edge path at $u$ of length $n$ and with each edge labeled $t$ ends in $vA$. We say $vA$ is {\it $n$ levels directly above} $wA$. Any coset $wA$ has exactly one coset $n(\geq 0)$ levels directly above it, but the cosets one level directly below $vA$ are in 1-1 correspondence with the cosets of $A$ in $G$.  This means

\begin{lemma} \label{directly} 
The set $D(N,M)=t^NA\{\ast,t^{-1},\ldots, t^{-M}\}$ is the union of cosets $vA$ that are $n$ levels directly below $t^NA$ for $n\in\{0,1,\ldots, M\}$.
\end{lemma}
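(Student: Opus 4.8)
The plan is to reduce this geometric statement to a short coset computation, using the normal form for $G$ and the conjugation relations. First I would record the elementary dictionary between the geometric language of the lemma and multiplication in $G$. Since a $t$-edge runs from a vertex $u$ to $ut$, a length-$n$ path of $t$-labeled edges starting at $u$ ends at $ut^n$, so a coset $wA$ is $n$ levels directly below $vA$ precisely when some vertex of $wA$ is carried into $vA$ by right multiplication by $t^n$. Invoking the uniqueness already noted in the text (the unique coset $n$ levels directly above $wA$ is $wt^nA$, independent of the chosen vertex), this says that $wA$ is $n$ levels directly below $t^NA$ if and only if $wt^nA=t^NA$, i.e. $w\in t^NAt^{-n}$. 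One should also note the converse direction is genuine: if $w=t^Nat^{-n}$ then $wt^n=t^Na\in t^NA$, so a $t^n$-path really does witness that $wA$ lies $n$ levels below $t^NA$.

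Next I would identify the union of all such cosets. The set of group elements $w$ satisfying $w\in t^NAt^{-n}$ is of course $t^NAt^{-n}$ itself, so the union of the cosets $n$ levels directly below $t^NA$ is $(t^NAt^{-n})A=t^NAt^{-n}A$. Here the essential input is the conjugation relation: $t^{-1}at=\phi(a)$ gives $t^{-n}At^n=\phi^n(A)\subseteq A$, and hence $t^{-n}A\subseteq At^{-n}$. Therefore $t^NAt^{-n}A\subseteq t^NAAt^{-n}=t^NAt^{-n}$, while the reverse inclusion is immediate, so the union of the cosets $n$ levels directly below $t^NA$ is exactly $t^NAt^{-n}$. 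This same computation simultaneously verifies that $t^NAt^{-n}$ is a genuine union of full left cosets of $A$, i.e. that it is saturated under right multiplication by $A$.

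Finally I would take the union over $n\in\{0,1,\ldots,M\}$ to obtain $\bigcup_{n=0}^M t^NAt^{-n}=t^NA\{\ast,t^{-1},\ldots,t^{-M}\}=D(N,M)$, which is the assertion of the lemma.

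The only step requiring care — and the one place where the \emph{ascending} hypothesis is truly used — is the passage $t^{-n}A\subseteq At^{-n}$; without $\phi^n(A)\subseteq A$ one could not absorb the trailing $A$, and $t^NAt^{-n}$ would fail to be a union of left cosets. Everything else is bookkeeping, mainly keeping the left/right coset conventions straight and matching the orientation of the level function $P$ (a $t$-edge raises the level by one, so the cosets lying "below" $t^NA$ occupy levels $N-n$).
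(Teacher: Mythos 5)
Your proof is correct and takes exactly the route the paper intends: the paper states this lemma without a separate proof, presenting it (``This means\dots'') as an immediate consequence of the preceding observations that a $t^n$-path carries every vertex of $wA$ into the single coset $wt^nA$ and that the coset directly above is unique. Your explicit coset computation $t^NAt^{-n}A=t^NAt^{-n}$, via $t^{-n}At^n=\phi^n(A)\subseteq A$, is precisely the bookkeeping the paper leaves implicit, and you correctly identify the ascending hypothesis as the one essential input.
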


\noindent {\bf Note.} In order to avoid confusion we may use the notation $H\cdot E$ instead of $HE$ when $H$ is a subgroup of $G$ and $E$ a subset of  $X$.
 
Let  $Q(M)=\{\ast, t^{-1}, t^{-2},\ldots, t^{-M}\}$ ($M\geq 0$) and notice that
the next lemma says that it is easy to check if a vertex $v$ of $X$ is in either $A \cdot Q(M)$, $K_0$ (a special component of $X-A\cdot Q(M)$) or a component of $X-A \cdot Q(M)$ other than $K_0$. If $v$ is in a level $>0$ then $v\in K_0$. If $v$ is in level $0$ through $-M$ then $v$ is in $A\cdot Q$ if the edge path from $v$ to level $0$, with each edge labeled $t$, ends in $A$ (i.e. $vA$ is $-P(v)$ levels directly below A); and $v$ is in $K_0$ otherwise. If $v$ is in a level $<-M$, then $v$ is in $K_0$ if the edge path from $v$ to level $0$, with each edge labeled $t$, does not end in $A$; and otherwise, $v$ belongs to a component of $X-A\cdot Q$ other than $K_0$. 
Note that $t^n\in K_0$ for all $n>0$, so that under the quotient of $X$ by $A$, the image of $K_0$ is not contained in a compact set. If $v\in K$ where $K$ is a  component of $X-A\cdot Q$ other than $K_0$ then $vt^n\in K$ for all $n<0$, so under the quotient of $X$ by $A$, the image of $K$ is not contained in a compact set. Our terminology for this is that $K$ and $K_0$ are $A$-unbounded components of $X-A\cdot Q$.

\begin{lemma} \label{nbhd} 
Let $Q(M)=\{\ast, t^{-1}, t^{-2},\ldots, t^{-M}\}$ for $M\geq 0$. Then 

1) $A\cdot Q(M)$ is the set of all vertices $v\in X$ such that $P(v)\in \{-M,\ldots, 0\}$ and $vt^{-P(v)}\in A$. Furthermore, if $v\in A\cdot Q(M)$ then $vA\subset A\cdot Q(M)$.

2) $X-A\cdot Q(M)$ has an $A$-unbounded component $K_0$ with stabilizer $A$ and the vertex $v$ of $X-A\cdot Q(M)$ is in $K_0$ if and only if either $P(v)\geq -M$ or both $P(v)<-M$ and $vt^{-P(v)}\not\in A$, 

3) if $K$ is any component of $X-A\cdot Q(M)$ other than $K_0$, then $K$ is $A$-unbounded, and if $v$ is a vertex of $K$, then $P(v)<-M$ and $vt^{-P(v)}\in A$.
\end{lemma}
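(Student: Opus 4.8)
The plan is to organize the whole analysis around the \emph{level-zero representative} $c(v):=vt^{-P(v)}$, which lies in $\ker P_0=\bigcup_{k\ge 0}t^kAt^{-k}$ and records the bi-infinite $t$-line $\{vt^k:k\in\mathbb Z\}$ through $v$, since $c(vt^{\pm1})=c(v)$. The one algebraic input I would use repeatedly is that $\phi$ carries $A$ into $A$, i.e. $t^{-k}At^k=\phi^k(A)\subseteq A$ for $k\ge 0$, equivalently $t^\ell A t^{-\ell}\subseteq A$ whenever $\ell\le 0$. For part (1) I would rewrite $A\cdot Q(M)=\bigcup_{j=0}^M At^{-j}$ and note, by the normal form, that $v=at^{-j}$ with $a\in A$, $0\le j\le M$, is exactly the condition $P(v)\in\{-M,\dots,0\}$ together with $vt^{-P(v)}=a\in A$. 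The inclusion $vA\subseteq A\cdot Q(M)$ then follows from $t^{-j}\alpha t^{j}=\phi^j(\alpha)\in A$, which slides any $\alpha\in A$ past $t^{-j}$ at the cost of replacing it by $\phi^j(\alpha)$.

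The core is the component analysis, and I would begin by recording the trichotomy: with $\ell=P(v)$, a vertex lies in $A\cdot Q(M)$ iff $-M\le\ell\le 0$ and $c(v)\in A$; otherwise it is a vertex of $X-A\cdot Q(M)$ of ``type II'' when $\ell>0$ or $c(v)\notin A$, and of ``type III'' when $\ell<-M$ and $c(v)\in A$. The separation step is to verify that no edge of $X-A\cdot Q(M)$ joins a type III vertex to a type II vertex. Here $t$-edges are harmless since they preserve $c$, and the only delicate case is an $\mathcal A$-edge $v\to va$ at a level $\ell\le 0$: here $c(va)=c(v)\,\phi^{|\ell|}(a)$, and this is precisely where $t^\ell At^{-\ell}\subseteq A$ for $\ell\le0$ is used, showing ``$c\in A$'' is preserved at non-positive levels. (At positive levels this fails, but there every vertex is type II, so it does not matter.) Consequently the set of type III vertices is closed under all edges of $X-A\cdot Q(M)$, hence a union of components, none of which can be $K_0$.

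The main obstacle is then showing the complementary set $R=\{P(v)>0\}\cup\{P(v)\le 0,\ c(v)\notin A\}$ is a \emph{single} connected component $K_0$, which I would do in two moves. First, any $v\in R$ with $P(v)\le 0$ has $c(v)\notin A$, so its upward ray $v,vt,vt^2,\dots$ retains the same $c\notin A$ and stays in $R$ until it reaches the slab $\{P\ge 1\}$; thus every vertex of $R$ connects inside $R$ to this slab. Second, I would connect the slab to the central line $\{t^\ell:\ell\ge1\}$: given $g=t^{n}at^{-m}$ at level $\ell=n-m\ge1$, the word $g^{-1}t^\ell=t^{m}a^{-1}t^{-m}$ spells an explicit edge path from $g$ up to level $n$, across the coset $t^nA$ by $\mathcal A$-edges, and back down to $t^\ell$, never dropping below level $\ell\ge1$; the central-line vertices are joined by $t$-edges. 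Together these show $R$ is connected, and since $R$ is also a union of components, it is exactly one, namely $K_0$, giving the characterizations in (2) and (3).

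Finally I would read off the remaining assertions. $A$-unboundedness of $K_0$ is witnessed by $\{t^\ell:\ell\ge1\}\subset K_0$ at unbounded levels (using that $P$ descends to $A\backslash X$), and $A$-unboundedness of any other component $K$ by the downward $t$-ray of any of its vertices, which remains of type III at levels tending to $-\infty$. For the stabilizer, $A\subseteq\mathrm{Stab}(K_0)$ because left multiplication by $a\in A$ fixes levels and sends $c(v)$ to $a\,c(v)$, preserving ``$c\notin A$''. Conversely, if $g$ stabilizes $K_0$ it stabilizes the complementary vertex set $B=\{at^{-j}:a\in A,\ j\ge0\}$; since the identity vertex lies in $B$, both $g=g\cdot1$ and $g^{-1}=g^{-1}\cdot1$ lie in $B$, forcing $P(g)\le0$ and $P(g)=-P(g^{-1})\ge0$, hence $P(g)=0$ and $g\in A$.
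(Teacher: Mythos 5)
Your proposal is correct, and it covers everything the lemma asserts --- including the $A$-unboundedness statements (which the paper actually disposes of in the paragraph \emph{preceding} the lemma rather than in its proof) and an \emph{exact} computation of the stabilizer, where the paper only verifies the containment $A\subseteq \mathrm{Stab}(K_0)$, which is all that is used later; your observation that $g K_0=K_0$ forces $g$ and $g^{-1}$ into $B=\{at^{-j}: a\in A,\ j\geq 0\}$, hence $P(g)=0$ and $g\in A$, is a nice bonus. The overall skeleton matches the paper's: part 1 via normal forms (the paper routes this through Lemma \ref{directly} on cosets directly below $A$), and the forward inclusions of part 2 via upward $t$-rays together with a normal-form path at positive levels connecting to the central line (the paper connects $v=t^l a t^{-m}$ with $P(v)>0$ to the vertex $t$ the same way). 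The genuine difference is in the delicate direction of part 2: the paper shows that $v\in K_0$ with $P(v)<-M$ forces $vt^{-P(v)}\notin A$ by a \emph{global} path argument --- take an edge path $\alpha$ from $t$ to $v$ in the complement, cut it at the last vertex $w$ in level $-M$, use conjugation relations to slide all $\mathcal A$-edges of the tail up to level $-M$, conclude $vt^k\in wA$, and invoke the coset-invariance of part 1. You instead prove the equivalent statement \emph{locally}: the invariant $c(v)=vt^{-P(v)}$ satisfies $c(va)=c(v)\,\phi^{-P(v)}(a)$ at levels $P(v)\leq 0$, so by $\phi^k(A)\subseteq A$ the condition ``$c(v)\in A$'' is preserved along every edge of the complement at non-positive levels; hence the type-III set is a union of components, and your connected set $R$ of type-II vertices is exactly one component $K_0$. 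Both arguments rest on the same algebraic input $t^{-k}At^{k}\subseteq A$, but your local-invariance formulation eliminates the conjugation-relation sliding entirely and makes the dichotomy of components a consequence of a one-line edge check, which is arguably cleaner and transfers verbatim to Lemma \ref{nbhd2}. (Like the paper, you work with edge paths when discussing components of $X-A\cdot Q(M)$; since only vertices are removed, this is the standard convention and costs nothing.)
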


\begin{proof} Part 1): This part follows directly from Lemma \ref{directly} (with $N=0$). 

Part 2): Let $K_0$ be the component of $X-A\cdot Q$ that contains the vertex $t$. Let $v$ be a vertex of $X$, then by normal forms, $v=t^lat^{-m}$ where $a\in A$ and $l,m\geq 0$. If $P(v)>0$, then $l>m$ and the normal form for $v$ defines an edge path from $t$ to $v$ in levels 1 and above, and hence avoiding $A\cdot Q$. So if $P(v)>0$, then $v\in K_0$. Note that  $P(at)=1$ for all $a\in A$, so that $A$ stabilizes $K_0$. 

Suppose $v\in X-A\cdot Q$ and  $P(v)\in \{-M,\ldots, 0\}$, then by part 1), $vt^{-P(v)}\not\in A$ and no point of the edge path beginning at $v$ with labeling $t^{-P(v)}$ is a point of $A\cdot Q$. Since $P(vt^{-P(v)+1})=1$, the edge path at $v$ with labeling $t^{-P(v)+1}$ avoids $A\cdot Q$ and ends at a point of $K_0$. So if $v\in X-A\cdot Q$ and $P(v)\in \{-M,\ldots,0\}$ then $v\in K_0$. 

Suppose $v\in X-A\cdot Q$ and $P(v)<-M$. Note that $P(vt^{-P(v)})=0$. If $vt^{-P(v)}\not \in A$, then we have already shown that $vt^{-P(v)}\in K_0$, and by part 1), no point of the path with labeling $t^{-P(v)}$ at $v$ intersects $A\cdot Q$. Hence $v\in K_0$. 

For the converse, suppose $v\in K_0$ and $P(v)<-M$. We must show $vt^{-P(v)}\not\in A$. Let $\alpha$ be an edge path in $X-A\cdot Q$ from $t$ to $v$. Let $\beta$ be a tail of $\alpha$ where $w$, the initial point of $\beta$, is the last point of $\alpha$ with $P(w)=-M$. 
The first edge of $\beta$ is labeled $t^{-1}$. Note that conjugation relations allow us to move each $A$-edge of $\beta$ up to level $-M$ so there is an edge path from $w$ to $v$ labeled $ (x_1,\ldots x_i, t^{-k})$ where $k> 0$ and $x_i\in \{a_1,\ldots, a_n\}^{\pm 1}$. Hence $vt^k\in wA$ and $P(vt^k)=-M$. By Part 1), $w\not\in A\cdot Q$ implies $wA \cap A\cdot Q=\emptyset$, so $vt^k\not\in A\cdot Q$. Again by Part 1), $vt^kt^{-P(vt^k)}\not\in A$. Then $vt^{-P(v)}=vt^kt^{-P(v)-k}=vt^kt^{-P(vt^k)}\not\in A$. This completes part 2).

Part 3): If $v\in K\ne K_0$ then by Part 2), $P(v)<-M$ and $vt^{-P(v)}\in A$.  
\end{proof}

We need a slightly stronger version of Lemma \ref{nbhd}.  Recall that $Q(M)=\{\ast, t^{-1}, t^{-2},\ldots, t^{-M}\}$. Then 
$$t^NA\cdot Q(M)=t^NAt^{-N} (t^N(Q(M))).$$
Observe that for any integer $m\geq 0$ the stabilizer of $t^m\Lambda$ is $t^mAt^{-m}$.
\begin{lemma} \label{nbhd2} 
Let  $M,N\geq 0$ be integers:

1) The set $t^NA\cdot Q(M)(=D(N,M))$  consists of the vertices $v\in X$ such that $P(v)\in \{N,N-1,\ldots,N-M\}$ and $vt^{N-P(v)}\in t^NA$. Furthermore, if $v\in t^NA\cdot Q(M)$ then $vA\subset t^NA\cdot Q(M)$.

2) Let $K_0$ be the component of $X-A\cdot Q(N)$ described by part 2 of Lemma \ref{nbhd}. Then $t^NK_0$ is a $(t^NAt^{-N})$-unbounded component of $X-t^NA\cdot Q(N)$ with stabilizer $t^NAt^{-N}$, and the vertex $v$ of $X-t^NA\cdot Q$ is in $t^NK_0$ if and only if either $P(v)\geq N-M$ or $P(v)<N-M$ and $vt^{N-P(v)}\not\in t^NA$, 

3) if $K$ is any component of $X-A\cdot Q$ other than $K_0$ then $t^NK$ is a $(t^NAt^{-N})$-unbounded component of $X-t^NA\cdot Q(M)$, and if $v$ is a vertex of $t^NK$, then $P(v)<N-M$ and $vt^{N-P(v)}\in t^NA$. 
\end{lemma}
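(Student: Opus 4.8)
The plan is to obtain Lemma \ref{nbhd2} by transporting Lemma \ref{nbhd} across the homeomorphism of $X$ given by left multiplication by $t^N$. The key observation is that $t^N$ acts on $X$ as a cellular automorphism, and under this action $A\cdot Q(M)$ is carried to $t^N A\cdot Q(M)$, while the component $K_0$ (and any other component $K$) of $X - A\cdot Q(M)$ is carried to $t^N K_0$ (respectively $t^N K$), which is then a component of $X - t^N A\cdot Q(M)$ of the same type. So nothing genuinely new is being proved; the work is just to track how the two diagnostics in Lemma \ref{nbhd}, namely the level $P(v)$ and the membership test, transform under $v \mapsto t^N v$.

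First I would record the two translation rules. Since $P$ is the homomorphism counting $t$-exponent and $P(t^N v) = N + P(v)$, the level of a vertex shifts by exactly $N$: a vertex $w \in t^N A\cdot Q(M)$ has the form $w = t^N v$ with $v \in A\cdot Q(M)$, so $P(w) \in \{N, N-1, \ldots, N-M\}$, matching part 1. For the membership condition, I would note that $v = t^{-N} w$ lies in $A\cdot Q(M)$ iff (by Lemma \ref{nbhd}.1) $v\, t^{-P(v)} \in A$, i.e. $t^{-N} w\, t^{-P(w)+N} \in A$, which rearranges to $w\, t^{N - P(w)} \in t^N A$. This is precisely the test stated in Lemma \ref{nbhd2}.1, and the ``furthermore'' clause ($vA \subset t^N A\cdot Q(M)$) follows because $A$ stabilizes $Q(M)$-cosets on the left after conjugating by $t^N$, exactly as in Lemma \ref{nbhd}.1.

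Next I would handle parts 2 and 3 by applying the automorphism to components. Because left multiplication by $t^N$ is a homeomorphism carrying $A\cdot Q(M)$ onto $t^N A\cdot Q(M)$, it carries components of the complement onto components of the complement bijectively; thus $t^N K_0$ is a component of $X - t^N A\cdot Q(M)$, and its stabilizer is the conjugate $t^N (\mathrm{Stab}\, K_0) t^{-N} = t^N A t^{-N}$ by Lemma \ref{nbhd}.2. The $A$-unboundedness of $K_0$ translates into $(t^N A t^{-N})$-unboundedness of $t^N K_0$: if $K_0 \not\subset A C_1$ for every compact $C_1$, then $t^N K_0 \not\subset (t^N A t^{-N}) (t^N C_1)$, and since $t^N$ is a homeomorphism the images $t^N C_1$ range over all compacta. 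Finally, the explicit vertex characterizations in parts 2 and 3 follow by feeding the two translation rules from the previous paragraph into the corresponding characterizations of Lemma \ref{nbhd}.2 and \ref{nbhd}.3, with every inequality on $P$ shifted by $N$ and every membership ``$\in A$'' replaced by ``$\in t^N A$''.

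The only point requiring a little care — and the closest thing to an obstacle — is the bookkeeping between the two stated complements $X - A\cdot Q(N)$ and $X - t^N A\cdot Q(M)$ appearing in part 2, versus the uniform $X - A\cdot Q(M)$ used elsewhere; I would make sure the indices $M$ and $N$ in the source of the translation match the intended statement, since $K_0$ is defined from one neighborhood and then pushed forward to describe another. Once the correct neighborhood is fixed, the argument is a routine transport of structure and the verifications are the mechanical substitutions indicated above.
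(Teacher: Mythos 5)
Your proposal is correct and takes essentially the same approach as the paper: the paper's proof likewise transports Lemma \ref{nbhd} by translating a vertex $v$ to $t^{-N}v$, applying that lemma, and translating back via $P(t^{-N}v)=P(v)-N$ and the equivalence $t^{-N}v\,t^{-P(t^{-N}v)}\in A \Leftrightarrow v\,t^{N-P(v)}\in t^NA$ (the paper establishes the $(t^NAt^{-N})$-unboundedness of $t^NK_0$ slightly more concretely, by exhibiting the all-$t$ ray at $t^{N+1}$, but your abstract transfer of $A$-unboundedness through the homeomorphism is equally valid). You were also right to flag the $Q(N)$ versus $Q(M)$ bookkeeping in part 2: those occurrences are typos for $Q(M)$, and both your argument and the paper's proof work uniformly with $Q(M)$.
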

\begin{proof}
Part 1): If $v\in t^NA\cdot Q(M)$, then $P(v)\in \{N,N-1,\ldots, N-M\}$. Note that  $P(t^{-N}v)=-N+P(v)\in \{0,\ldots, -M\}$. Lemma \ref{nbhd} implies, $t^{-N}v\in A\cdot Q$ if and only if  $t^{-N}vt^{-P(t^{-N}v)}\in A$ if and only if $vt^{N-P(v)}\in t^NA$. Furthermore if $v\in t^NA\cdot Q(M)$ then $t^{-N}v\in A\cdot Q(M)$ and by Lemma \ref{nbhd}, $t^{-N}vA\subset A\cdot Q(M)$ so that $vA\subset t^NA\cdot Q(M)$. 

Part 2): By Lemma \ref{nbhd},  $t^NK_0$ is a component of $X-t^N(A\cdot Q(M))$. Since $t\in K_0$, $t^{N+1}\in t^NK_0$, and so the proper ray at $t^{N+1}$ with all edge labels $t$ belongs to $t^NK_0$. In particular, $t^NK_0$ is $t^NAt^{-N}$-unbounded.  Since $A$ stabilizes $A\cdot Q(M)$, $t^NAt^{-N}$ stabilizes $t^NA\cdot Q(M)$. The vertex $v$  of $X$ belongs to $t^NK_0$ if and only if $t^{-N}v\in K_0$, (by Lemma \ref{nbhd}) if and only if $P(t^{-N}v)\geq -M$ or both $P(t^{-N}v)< -M$ and $t^{-N}vt^{-P(t^{-N}v)}\not\in A$, if and only if $P(v)\geq N-M$ or both $P(v)<N-M$ and $vt^{N-P(v)}\not\in t^NA$.  

Part 3): Suppose $v$ is a vertex of $t^NK$ then $t^{-N}v\in  K$.  By Lemma \ref{nbhd}, $P(t^{-N}v)<-M$ (so $P(v)<N-M$) and $t^{-N}vt^{-P(t^{-N}v)}\in A$ (so $vt^{N-P(v)}\in t^NA$).
\end{proof}

Geometrically, the only difference between Lemma \ref{nbhd2} and Lemma \ref{nbhd} is that in order to check if a vertex $v$ in a level of $X$ less than $N$, belongs to either $t^NA\cdot Q(M)$, $t^NK_0$ or $t^NK$ for $K$ a component of $X-A\cdot Q(M)$ different than $K_0$, one simply checks if the end point of the path at $v$ with each edge labeled $t$ and ending in level $N$, ends in $t^NA$ or not. It is also important to observe the following remark.

\begin{remark}\label{coset} 
For any integers $M,N\geq 0$ the set $t^NA\cdot Q(M)(=D(N,M))$ and any component of $X-D(N,M)$ is a union of cosets $vA$. 
\end{remark} 

\begin{lemma} \label{up} 
Suppose $M,N\geq 0$ are integers and $v$ is a vertex of the  component $t^NK_0$ of $X-t^NA\cdot Q(M)$. Then for any integer $n\geq 0$, $(vt^nA)\cap A\cdot Q(M)=\emptyset$.
\end{lemma}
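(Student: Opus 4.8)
The plan is to reduce the statement to a purely level-theoretic condition on the single vertex $vt^n$ and then exploit the characterization of $t^NK_0$ from Lemma \ref{nbhd2}. First I would observe that, by Lemma \ref{nbhd} part 1, $A\cdot Q(M)$ is a union of cosets of $A$; consequently the coset $vt^nA$ is either contained in $A\cdot Q(M)$ or disjoint from it, and the former happens exactly when the vertex $vt^n$ itself lies in $A\cdot Q(M)$. Thus it suffices to prove that $vt^n\notin A\cdot Q(M)$ for every $n\geq 0$.

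Next I would unwind the membership condition. By Lemma \ref{nbhd} part 1, $vt^n\in A\cdot Q(M)$ if and only if $P(vt^n)\in\{-M,\dots,0\}$ and $(vt^n)t^{-P(vt^n)}\in A$. Since $P(vt^n)=P(v)+n$, the second factor equals $vt^{-P(v)}$, which is independent of $n$. So the whole question splits on whether $vt^{-P(v)}\in A$. In the easy case $vt^{-P(v)}\notin A$ the second condition fails for every $n$, and we are done immediately.

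The hard case is $vt^{-P(v)}\in A$, and here is where I expect the real work. The key computation is $At^N\subseteq t^NA$, coming from $at^N=t^N\phi^N(a)$ with $\phi^N(a)\in A$; applying this to $vt^{-P(v)}\in A$ gives $vt^{N-P(v)}=(vt^{-P(v)})t^N\in t^NA$. Now I invoke that $v$ is a vertex of $t^NK_0$, hence $v\notin t^NA\cdot Q(M)$. Feeding $vt^{N-P(v)}\in t^NA$ into Lemma \ref{nbhd2} part 2 rules out the alternative ``$P(v)<N-M$ and $vt^{N-P(v)}\notin t^NA$'', forcing $P(v)\geq N-M$; and feeding it into Lemma \ref{nbhd2} part 1 together with $v\notin t^NA\cdot Q(M)$ rules out $P(v)\in\{N-M,\dots,N\}$. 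Combining these two facts yields $P(v)>N\geq 0$. Then $P(vt^n)=P(v)+n\geq 1$ lies outside $\{-M,\dots,0\}$, so the first membership condition fails for every $n\geq 0$, completing the argument.

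The main obstacle is this hard case: recognizing that the single algebraic identity $At^N\subseteq t^NA$ converts the hypothesis $vt^{-P(v)}\in A$ into the $t^NA$-membership needed to apply Lemma \ref{nbhd2}, and then realizing one must use both parts 1 and 2 of that lemma in tandem to squeeze $P(v)$ strictly above level $N$. Everything else is routine bookkeeping with the level function $P$.
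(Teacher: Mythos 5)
Your proof is correct, and its skeleton is the same as the paper's: reduce to the single vertex $vt^n$ using the fact that the relevant set is saturated by cosets $wA$ (the ``furthermore'' clauses of Lemmas \ref{nbhd} and \ref{nbhd2}), observe that $vt^nt^{-P(vt^n)}=vt^{-P(v)}$ is independent of $n$, and then apply parts 1) and 2) of Lemma \ref{nbhd2}; the paper's entire proof is this compressed into two sentences. The one genuine difference comes from a discrepancy inside the paper itself: the displayed statement of Lemma \ref{up} asserts $(vt^nA)\cap A\cdot Q(M)=\emptyset$, but the paper's proof actually establishes $vt^n\notin t^NA\cdot Q(M)$, and it is this $t^N$-version that is used later (Lemma \ref{hup} part 2) and Lemma \ref{corner} need the pushed-up homotopies to avoid $t^NA\cdot Q(M)$ in order to stay in $t^NK_0$) --- the statement is evidently missing a $t^N$. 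You faithfully proved the literal printed statement, and your identity $At^N\subseteq t^NA$ (from $at=t\phi(a)$) is precisely the bridge that makes the literal version go through: in your hard case, $vt^{-P(v)}\in A$ forces $vt^{N-P(v)}\in t^NA$, and then membership of $v$ in $t^NK_0$ squeezes out $P(v)>N$, killing the level condition for every $n\geq 0$; I checked each step and it is sound. Reassuringly, your two-case analysis also proves the intended statement, and more directly: testing $vt^n$ against $t^NA\cdot Q(M)$ via Lemma \ref{nbhd2} part 1), either $vt^{N-P(v)}\notin t^NA$ and the second membership condition fails for all $n$, or $vt^{N-P(v)}\in t^NA$ and your argument yields $P(v)>N$, so $P(vt^n)>N$ is out of the range $\{N-M,\ldots,N\}$ --- with no appeal to $At^N\subseteq t^NA$ needed. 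So there is no gap; the only amendment I would make is to record the corrected conclusion $(vt^nA)\cap t^NA\cdot Q(M)=\emptyset$ alongside the one you proved, since that is the form invoked downstream.
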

\begin{proof} 
By 1) of  Lemma \ref{nbhd2}, it suffices to show that $vt^n\not\in t^NA\cdot Q(M)$. But this follows directly from parts 1) and 2) of Lemma \ref{nbhd2}.
\end{proof}

\noindent $(\ast )$ From this point on we assume the presentation $\mathcal P$ has bounded depth $B\geq 0$.
 
 \begin{lemma} \label{killD} 
 If $\alpha$ is an edge path loop in $X$ and $im(P(\alpha))\subset (-\infty,L]$, then $\alpha$ is homotopically trivial by a homotopy $H$ such that $im(P(H))\subset (-\infty, L+B]$. 
 \end{lemma}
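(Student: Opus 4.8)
The plan is to reduce $\alpha$ to an $\mathcal{A}$-loop lying in a single level $\le L$ and then invoke Remark~\ref{loopkill}, which already records exactly the level-controlled conclusion we want for single-level loops (and supplies the extra slack $B$). Write $L_{\max}=\max P(\alpha)$; since the image of $P(\alpha)$ lies in $(-\infty,L]$ we have $L_{\max}\le L$. I would regard $\alpha$ as a cyclic edge-path word in $\mathcal{A}\cup\{t\}$, based (harmlessly, for a free null-homotopy) at a vertex in level $L_{\max}$, and homotope it, staying in levels $\le L_{\max}$, to a loop $\alpha'$ all of whose vertices lie in level $L_{\max}$.

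First I would fill valleys from the bottom. Let $m$ denote the current minimum level visited by the loop. Every maximal $\mathcal{A}$-subpath $w$ at level $m$ is necessarily flanked by a descending $t^{-1}$-edge and an ascending $t$-edge, since the path cannot drop below the minimum; hence it occurs as a cyclic subword $t^{-1}\,w\,t$. The conjugation relations give $t^{-1}wt=\phi(w)$, and the associated homotopy is a strip of conjugation $2$-cells spanning levels $[m,m+1]$, so replacing $t^{-1}wt$ by the level-$(m{+}1)$ $\mathcal{A}$-path $\phi(w)$ moves this part of the loop up one level without ever exceeding $m+1\le L_{\max}$. Any bare subword $t^{-1}t$ (a visit to level $m$ crossing no $\mathcal{A}$-edge) is a backtrack and is deleted. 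After processing all level-$m$ pieces, no vertex remains in level $m$, so the minimum strictly increases; iterating the finitely many times needed yields an $\mathcal{A}$-loop $\alpha'$ in level $L_{\max}$, reached by a combinatorial homotopy $H_0$ with $P(H_0)$ having image in $(-\infty,L_{\max}]$.

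Next I would identify the label of $\alpha'$. The loop $\alpha'$ lies in a single coset $gA$ with $P(g)=L_{\max}$, i.e.\ in the left translate $g\Lambda$ of the Cayley graph of $A$; being closed, its label $w'$ represents the identity of $A$, so by Theorem~\ref{rel} together with the standing bounded-depth hypothesis $(\ast)$ we have $w'\in N^{\infty}(\mathcal{R},\phi)=\bigcup_{i=0}^{B}\phi^{-i}(N_0(\mathcal{R},\phi))$. Remark~\ref{loopkill} then kills $\alpha'$ by a combinatorial homotopy $H_1$ with $P(H_1)$ having image in $(-\infty,L_{\max}+B]$. Concatenating $H_0$ and $H_1$ produces a null-homotopy $H$ of $\alpha$ with $P(H)$ having image in $(-\infty,L_{\max}+B]\subset(-\infty,L+B]$, as required.

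The only genuine obstacle is the level bookkeeping: I must be sure that the reduction to a single level never pushes the loop above $L_{\max}$. Pushing valleys \emph{upward} from the current floor (rather than dragging peaks downward) is exactly what guarantees this, since each elementary move lives in $[m,m+1]$ with $m<L_{\max}$. All of the $2$-cells used in $H_0$ are conjugation cells, which contribute nothing above $L_{\max}$, while the only source of the extra height $B$ is the depth bound fed into Remark~\ref{loopkill}. A minor point to verify carefully is the flanking claim—that once the loop is based at a level-$L_{\max}$ vertex, every minimum-level $\mathcal{A}$-subpath sits in the interior bounded by $t^{-1}\cdots t$—together with termination, which follows because each round strictly raises the minimum level of a finite edge path.
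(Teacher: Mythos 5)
Your proof is correct and follows essentially the same route as the paper: use conjugation $2$-cells to homotope $\alpha$ to an $\mathcal{A}$-loop in a single level while keeping $P$-image in $(-\infty,L]$, observe that its label lies in $\ker(p)=\bigcup_{i=0}^{B}\phi^{-i}(N_0(\mathcal R,\phi))$ by Theorem~\ref{rel} and bounded depth, and finish with Remark~\ref{loopkill}. The only difference is that you spell out the valley-filling mechanics (and use $L_{\max}\le L$ rather than level $L$ itself), details the paper's proof asserts in a single sentence.
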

 \begin{proof}
Using only conjugation 2-cells, $\alpha$, is homotopic (by a homotopy $H_1$) to an edge path loop $\beta$, each of whose vertices is in level $L$. In particular, each edge of $\beta$ is labeled by an element of $\mathcal A$ and $im(P(H_1))\subset (-\infty, L]$. The word $w$ determined by the edge labeling of $\beta$ is in the kernel of the epimorphism $p:F(\mathcal A)\to A$. So $w\in\cup_{i=0}^{B}\phi^{-i}(N_0(\mathcal R,\phi))$.  By Remark \ref{loopkill}, the loop $\beta$ (and hence $\alpha$) is homotopically trivial by a homotopy $H$ such that $im(P(H))\subset (-\infty, L+B]$. 
\end{proof}

\begin{lemma} \label{hup} 
Suppose $M, N\geq 0$ are integers, $\alpha$ is a loop in $X-t^NA\cdot Q(M)$ and $B$ is the bounded depth of the presentation $P$. 

1) If $\alpha$ has image in a component of $X-t^NA\cdot Q(M)$ other than $t^NK_0$, then $\alpha$ is homotopically trivial by a homotopy $H$ such that $P(H)$ has image in $(-\infty, B+N-M]$,

2) if $\alpha$ has image in $t^NK_0$, $v$ is a vertex of $\alpha$ and $r_v$ is the proper edge path ray at $v$ with each edge labeled $t$, then there is proper homotopy $H:[0,\infty)\times [0,1]\to t^NK_0$ where $H(x,0)=H(x,1)=r_v(x)$, and $H(0,y)=\alpha (y)$. 
\end{lemma}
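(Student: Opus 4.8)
The plan is to handle the two parts by quite different mechanisms: Part 1) is a direct level estimate fed into Lemma \ref{killD}, while Part 2) is an explicit ``push up along the $t$-direction'' homotopy built edge-by-edge out of the maps of Lemma \ref{push}.

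For Part 1), suppose $\alpha$ lies in a component of $X-t^NA\cdot Q(M)$ other than $t^NK_0$. Every such component is $t^NK$ for some component $K\ne K_0$ of $X-A\cdot Q(M)$, since left translation by $t^N$ is a cellular homeomorphism carrying $A\cdot Q(M)$ to $t^NA\cdot Q(M)$. By part 3 of Lemma \ref{nbhd2}, every vertex $w$ of $\alpha$ satisfies $P(w)<N-M$, hence $P(w)\le N-M-1$; since the value of $P$ along any edge lies between the values at its endpoints, $\mathrm{im}(P(\alpha))\subseteq(-\infty,N-M-1]$. Applying Lemma \ref{killD} with $L=N-M-1$ then produces a null-homotopy $H$ of $\alpha$ (recall $X$ is simply connected) with $\mathrm{im}(P(H))\subseteq(-\infty,N-M-1+B]\subseteq(-\infty,B+N-M]$, which is the assertion.

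For Part 2), I would write $\alpha$ as an edge path $(e_1,\dots,e_k)$ with vertices $v=w_0,w_1,\dots,w_k=v$, each $w_i\in t^NK_0$. For each edge $e_i$ I build a vertical strip pushing $e_i$ to infinity: if $e_i$ is an $\mathcal A$-edge I use the proper map $H_{e_i}$ of Lemma \ref{push}, whose two sides are the $t$-rays $r_{w_{i-1}}$ and $r_{w_i}$; if $e_i$ is a $t$-edge I use the degenerate strip $(y,s)\mapsto r_{w_{i-1}}(s+y)$ that slides $e_i$ up inside the single ray $r_{w_{i-1}}$ (legitimate because $r_{w_i}$ is just $r_{w_{i-1}}$ shifted up by one). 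Concatenating these strips in the loop coordinate $y$ — adjacent strips agree on the shared ray $r_{w_i}$, and the outer sides are $r_{w_0}=r_{w_k}=r_v$ — assembles a map $H\colon[0,\infty)\times[0,1]\to X$ with $H(x,0)=H(x,1)=r_v(x)$ and $H(0,y)=\alpha(y)$, exactly as in the proof of Lemma \ref{string}.

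The heart of the argument, and the step I expect to be the main obstacle, is verifying that $\mathrm{im}(H)\subseteq t^NK_0$ rather than merely $X-t^NA\cdot Q(M)$. The image of $H$ is covered by the up-rays $r_w$ for the vertices $w$ occurring in the construction together with the conjugation $2$-cells produced by the $H_{e_i}$. By Lemma \ref{up}, for each vertex $w$ of $t^NK_0$ and each $n\ge 0$ the vertex $wt^n$ avoids $t^NA\cdot Q(M)$; since $r_w$ is connected and meets $t^NK_0$, all of $r_w$ lies in $t^NK_0$. For the $2$-cells, every vertex occurring in $H_{e_i}$ lies in a coset of the form $(w_it^n)A$, and by Remark \ref{coset} each component of $X-t^NA\cdot Q(M)$ is a union of $A$-cosets; thus $(w_it^n)A$, meeting $t^NK_0$ through $w_it^n\in r_{w_i}$, lies entirely in $t^NK_0$, and each open $2$-cell, not being a cell of the subcomplex $t^NA\cdot Q(M)$ since its vertices lie outside that subcomplex, is disjoint from $t^NA\cdot Q(M)$ and is joined to $t^NK_0$ along its boundary. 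Hence every cell in the image of $H$ lies in $t^NK_0$. Finally, properness follows exactly as in Lemma \ref{string}: for compact $C$ we have $P(C)\subseteq[-L,L]$, each $H_{e_i}$ is proper by Lemma \ref{push}, only finitely many $H_{e_i}$ meet $C$ by the argument of Lemma \ref{straight}, and $H^{-1}(C)$ is the corresponding finite union of compact sets.
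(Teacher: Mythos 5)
Your proposal is correct and follows essentially the same route as the paper: part 1) is the level estimate from Lemma \ref{nbhd2}.3 fed into Lemma \ref{killD}, and part 2) strings together the strips $H_e$ of Lemma \ref{push}, with Lemma \ref{up} (together with the fact that components of the complement are unions of $A$-cosets) forcing the image into $t^NK_0$ and properness coming from the finitely many proper strips. Your explicit degenerate strip for $t$-edges and the careful check that the conjugation $2$-cells lie in $t^NK_0$ merely spell out details the paper's shorter proof leaves implicit.
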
 

\begin{proof} 
Part 1): By 3) of Lemma \ref{nbhd2}, $im(P(\alpha))\subset (-\infty, N-M]$. Lemma \ref{killD} finishes part 1). 

Part 2): Let $H$ be the homotopy that strings together the homotopies $H_e$ of Lemma \ref{push} for each $\mathcal A$-edge $e$ of $\alpha$. The image of $H$ avoids $t^NA\cdot Q(M)$ by Lemma \ref{up} and so is in $t^NK_0$. The homotopy $H$ is proper since it is a combination of finitely many proper homotopies.
\end{proof}

By Lemma \ref{below2}, if $C$ is a compact subset of $X$, there are integers $M(C)$ and $N(C)$ such that $C\subset t^NA\cdot Q(M)$. 
\begin{theorem} \label{FP} 
Suppose $G$ is an ascending HNN extension of the finitely presented group $A$ and $X$ is the Cayley 2-complex for the HNN presentation with stable letter $t$ and base $A$ (with a finite presentation of $A$ as a sub-presentation). If $M, N\geq 0$ are integers and $\alpha$ is a loop in $X-t^NA\cdot Q(M)$ $(=X-t^NAt^{-N}\cdot (t^NQ(M)))$ then $\alpha$ is homotopically trivial in $X-t^NA\cdot Q(M)$. 
\end{theorem}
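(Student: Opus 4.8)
The plan is to analyze $\alpha$ according to which component of $X-t^NA\cdot Q(M)$ it lies in, using the component classification of Lemma \ref{nbhd2}, and in each case to push $\alpha$ by conjugation $2$-cells onto a single level and then fill it there using the finite presentation $\langle \mathcal A:\mathcal R\rangle$ of $A$. The crucial structural input is that since $A$ is finitely presented, any $\mathcal A$-edge-loop lying entirely in one level $L$ whose label is trivial in $A$ (equivalently lies in $N(\mathcal R)=\ker(p)$) bounds a disk built from $\mathcal R$-$2$-cells, and these $2$-cells are themselves $\mathcal A$-cells lying in level $L$. Thus such a loop can be killed by a homotopy whose image is contained in level $L$; this is the single place where finite presentability (i.e.\ bounded depth $B=0$) is used, and it is what lets every filling homotopy avoid the slab $t^NA\cdot Q(M)$, which occupies exactly the levels $N-M,\ldots,N$.

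First I would handle a loop $\alpha$ lying in a component $t^NK$ other than $t^NK_0$. By Lemma \ref{nbhd2}(3) every vertex of such a component has level strictly below $N-M$, so $\alpha$ has maximal level $L\le N-M-1$. Stringing together the homotopies $H_e$ of Lemma \ref{push}, I would push $\alpha$ up to the single level $L$, obtaining an $\mathcal A$-loop $\beta$; since the condition ``$u\,t^{N-P(u)}\in t^NA$'' characterizing $t^NK$ is preserved as one moves up $t$-edges, and the entire push stays in levels $\le L<N-M$ and hence off the slab, the homotopy remains inside $t^NK$. Filling $\beta$ by $\mathcal R$-cells at level $L$ then kills $\alpha$ without leaving level $L$, so $\alpha$ is null-homotopic in $t^NK\subset X-t^NA\cdot Q(M)$.

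The main case, and the principal obstacle, is when $\alpha$ lies in the unbounded upper component $t^NK_0$, because here one cannot simply fill at the loop's own level, as that level may meet the slab. My plan is to exploit that $t^NK_0$ contains all of $X$ above level $N$: using conjugation $2$-cells I would push $\alpha$ up to a single level $L'>N$, obtaining an $\mathcal A$-loop $\beta'$. By Lemma \ref{up}, every upward $t$-translate of a vertex of $t^NK_0$ again avoids the slab, so this push-up homotopy stays inside $t^NK_0$ (this is exactly the mechanism behind Lemma \ref{hup}(2), only carried to a fixed finite height rather than to $\infty$). Since $\beta'$ is a loop, its label lies in $\ker(p)=N(\mathcal R)$, so $\beta'$ bounds a disk of $\mathcal R$-cells at level $L'>N$, which lies in $t^NK_0$. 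Composing the push-up with this filling kills $\alpha$ inside $t^NK_0$, completing the proof.

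The step I expect to require the most care is verifying that the conjugation-$2$-cell homotopies genuinely stay within a single component of the complement: I must check that the interiors of the conjugation cells, not merely their vertices, avoid the slab, and that the preserved invariant ``$u\,t^{N-P(u)}\in t^NA$'' (respectively $\notin$) keeps the moving loop in the component where it started. Both reduce to the level and coset bookkeeping of Lemmas \ref{nbhd2} and \ref{up}, together with the observation that pushing a bottom-component loop only up to level $N-M-1$, and never to $N-M$, keeps it strictly below the slab.
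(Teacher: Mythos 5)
Your proposal is correct and takes essentially the same route as the paper's proof: a case analysis on the components of $X-t^NA\cdot Q(M)$ via Lemma \ref{nbhd2}, pushing the loop to a single level with conjugation $2$-cells, and filling it there inside a copy of the Cayley $2$-complex of $A$, with finite presentability guaranteeing the $\mathcal A$-loop's label lies in $N(\mathcal R)$. Your choice to push the $t^NK_0$-loop to a level $L'>N$ is only a harmless concrete variant of the paper's appeal to Lemma \ref{up}, which shows the copy of $\Lambda$ at any upward translate already avoids $t^NA\cdot Q(M)$.
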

\begin{proof}
We present the case where $N=0$ as all others are completely analogous.
Let $\Lambda$ be the Cayley 2-complex for $A$, determined by the presentation of $A$ within our HNN presentation of $G$.  If $K$ is a component of $X-A\cdot Q$ other than $K_0$ and $\alpha$ is an edge path loop in $K$, then each vertex $v$ of $\alpha$ is such that $P(v)<-M$. Using conjugation relations $\alpha$ is homotopic in $K$ to an $A$-loop $\alpha_1$ in level $-M-1$. Then $\alpha_1$ lies in a copy of $\Lambda$ in level $-M-1$ and so is homotopically trivial in level $-M-1$.

If $\alpha$ is an edge path loop in $K_0$, then by Lemma \ref{up}, conjugation relations can be used to show that $\alpha$ is homotopic to a loop $\alpha_1$ in a single level and this homotopy avoids $A\cdot Q$. Lemma \ref{up} also implies that $\alpha_1$ is in a copy of $\Lambda$ that avoids $A\cdot Q$. As $\alpha_1$ is homotopically trivial in that copy of $\Lambda$, $\alpha_1$ (and hence $\alpha$) is homotopically trivial in $X-A\cdot Q$.
\end{proof}

Suppose $M,N\geq 0$ are integers and $s$ is a proper edge path ray in $X-t^NA\cdot Q(M)$ with initial vertex $v\in K_0$. If $q$ is the quotient of $X$ by the action of $t^NAt^{-N}$ and $qs$ has image in a compact subset of $(t^NAt^{-M})\backslash X$ (so $s$ is $t^NAt^{-N}$-bounded), then each vertex of $s$ is within edge path distance $\leq K$ of $t^NA$ and $Ps$ has image in the  closed interval $[N-K, N+K]$. 

\begin{lemma} \label{corner} 
Suppose $M,N\geq 0$ are integers, $s$ is a proper edge path ray in the $t^NK_0$ component of $X-t^NA\cdot Q(M)$ and $s(0) =v$. Let $r_v$ be the proper edge path ray at $v$, each of whose edges is labeled $t$.  If $Ps$ has image in a closed interval then $s$ is properly homotopic to $r_v$ by a homotopy with image in $t^NK_0$. 
\end{lemma}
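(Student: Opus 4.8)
The plan is to build the homotopy by stringing together vertical push-up homotopies along the edges of $s$, exactly as in Lemma \ref{string}, but now accommodating the $t$-edges of $s$ and checking that everything stays inside $t^NK_0$. Write $s=(s_0,s_1,\ldots)$ with vertices $v=v_0,v_1,v_2,\ldots$, each $v_i\in t^NK_0$, and let $r_{v_i}$ be the vertical ray at $v_i$. Since $Ps$ has image in a closed interval, there is a constant $K\geq 0$ with $P(v_i)\in[N-K,N+K]$ for all $i$. First I would attach to each edge $s_i$ a strip homotopy $H_{s_i}:[0,1]\times[0,\infty)\to X$ whose bottom edge ($y=0$) is $s_i$, whose left edge is $r_{v_i}$ and whose right edge is $r_{v_{i+1}}$, with the vertical rays parametrized in the standard way $y\mapsto v_it^{\,y}$ so that consecutive strips glue. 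For an $\mathcal A$-edge $s_i$ this is precisely the homotopy $H_{s_i}$ of Lemma \ref{push}. For a $t$-edge $s_i$ both endpoints lie on a common vertical ray, so one side ray is a tail of the other and the strip can be taken to be the degenerate ``shift'' $H_{s_i}(x,y)=v_it^{\,x+y}$ when $v_{i+1}=v_it$, and $H_{s_i}(x,y)=v_it^{\,y-x}$ when $v_{i+1}=v_it^{-1}$; in either case the bottom edge is $s_i$ and the side edges are $r_{v_i},r_{v_{i+1}}$, with parametrizations matching those of the $\mathcal A$-strips.

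Gluing the $H_{s_i}$ along $[i,i+1]\times[0,\infty)$ yields a map $H:[0,\infty)\times[0,\infty)\to X$ whose bottom edge is $s$ and whose left edge is $r_{v_0}=r_v$; as in Lemma \ref{string} this is a homotopy of $s$ to $r_v$ rel $\{v\}$. To see that the image lies in $t^NK_0$, note that each vertical ray $r_{v_i}$ stays in $t^NK_0$ by Lemma \ref{up}, the $\mathcal A$-edge strips stay in $t^NK_0$ by the argument of Lemma \ref{hup}(2), and each $t$-edge strip is contained in the union of $r_{v_i}$, $r_{v_{i+1}}$ and the edge $s_i$, all of which lie in $t^NK_0$.

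The main work, and the step where the hypothesis on $Ps$ is used, is properness of $H$. Let $C$ be compact with $P(C)\subset[-L,L]$. Each strip $H_{s_i}$ is proper on its own (for $\mathcal A$-edges by Lemma \ref{push}, and for $t$-edges because the shift maps above are proper whenever $r_{v_i}$ is), so it suffices to show that only finitely many strips meet $C$; then $H^{-1}(C)$ is a finite union of compact sets, hence compact. An $\mathcal A$-edge strip has image in levels $\geq P(v_i)$, and a $t$-edge strip meets levels $\leq L$ only along $r_{v_i}$ near its base, so in every case the part of $H_{s_i}$ that can meet $C$ lies in levels $[P(v_i),L]$ and, by the length estimate in the proof of Lemma \ref{straight}, within edge-path distance $(L-P(v_i))+L_\phi^{\,L-P(v_i)}+1$ of $v_i$, where $L_\phi$ is the maximal length of a word $\phi(a)$, $a\in\mathcal A$. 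Because $P(v_i)\geq N-K$, this radius is bounded by a constant independent of $i$; since $s$ is proper, only finitely many $v_i$ lie within any fixed distance of $C$, so only finitely many strips meet $C$. I expect this to be the crux of the argument, and it is exactly where boundedness of $Ps$ is indispensable: if $P(v_i)$ were allowed to tend to $-\infty$, the footprint of a push-up strip would grow like $L_\phi^{\,L-P(v_i)}$ without bound, and infinitely many strips could reach $C$, destroying properness. Reparametrizing the quarter plane in the usual way then gives the asserted proper homotopy of $s$ to $r_v$ rel $\{v\}$ with image in $t^NK_0$.
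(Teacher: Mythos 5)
Your proof is correct, and it reaches the conclusion by a noticeably different organization than the paper, even though both arguments are at bottom ``push everything up along $t$-rays'' using the strips of Lemma \ref{push} and Lemma \ref{up} to stay inside $t^NK_0$. The paper's proof is a two-stage normalization: writing $[L,M]$ for the image of $Ps$, it first slides every $\mathcal A$-edge of $s$ up to the top level $M$ via conjugation cells (Lemma \ref{up} keeping this slide in $t^NK_0$, and boundedness of $Ps$ making the slide heights, hence the slide strips, uniformly bounded), obtaining after cancellation of backtracking $t$-edges a proper $\mathcal A$-ray $s'$ in a single level whose initial point lies on $r_v$; it then quotes Lemma \ref{string} as a black box to homotop $s'$ to the vertical subray $r'$ of $r_v$. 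You instead never normalize: you glue strips directly over the original ray, introducing degenerate ``shift'' strips over $t$-edges, and you redo the properness bookkeeping by hand with the explicit radius bound $(L-P(v_i))+L_\phi^{\,L-P(v_i)}+1$. This is a legitimate adaptation --- note that you correctly use the \emph{proof} of Lemma \ref{straight} rather than its statement, since that lemma as stated concerns edges of $\Lambda$ at level $0$, while your edges sit at varying levels --- and it buys two things: a one-step construction with no cancellation of backtracking edges to worry about, and a slightly stronger result, since your uniform-radius estimate only requires $Ps$ to be bounded \emph{below} (strips based at levels $P(v_i)>L$ miss $C$ automatically), whereas the paper's route needs the upper bound as well just to define the target level $M$. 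The cost is that you re-derive properness that the paper gets by citing Lemmas \ref{straight} and \ref{string}; your identification of the uniform-radius bound as the crux, and of where boundedness of $Ps$ is indispensable, matches exactly the role this hypothesis plays (implicitly) in making the paper's sliding homotopy proper.
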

\begin{proof}
Assume that the image of $Ps$ is $[L,M]$. By Lemma \ref{up},  one can use conjugation relations to slide each $A$-edge of $s$ along $t$-edges to level $M$, by a homotopy with image in $t^NK_0$. So $s$ is properly homotopic to $s'$, the resulting proper ray which (after removing any backtracking edges $(t,t^{-1})$ or $(t^{-1},t)$) is a proper $\mathcal A$-ray. Let $r'$ be the proper edge path ray at the initial point of $s'$ with all edges labeled $t$ (so $r'$ is a sub-ray of $r_v$).  Let $H$ be the proper homotopy of $s'$ to $r'$ defined in Lemma \ref{string}. By Lemma \ref{up}, $H$ has image in $t^NK_0$.\end{proof}

\begin{proof} {\bf (of Theorem \ref{mainbd})} Let $X$ be the Cayley 2-complex of $\mathcal P$. By Proposition \ref{strongss}, $t^NAt^{-N}$ is  semistable at $\infty$ in $X$ for all $N\geq 0$ and in $\S$\ref{ss} we reduced the proof of Theorem \ref{mainbd} to showing that for each compact set $C$ in $X$ there is an integer $N\geq 0$ such that $t^NAt^{-N}$ is co-semistable at $\infty$ in $X$ with respect to $C$. That means:

For any finite subcomplex $C$ of $X$ there is an integer $N\geq 0$, and compact set $D$ such that for any proper $t^NAt^{-N}$-bounded ray $s$ in $X-t^NAt^{-N}D$ and  loop $\alpha$ in $X-t^NAt^{-N}D$ such that $\alpha(0)=s(0)$, there is a proper homotopy $H:[0,1]\times [0,\infty)\to X-C$ such that $H(0,t)=H(1,t)=s(t)$ and $H(t,0)=\alpha$.    

Start with a finite subcomplex $C$ of $X$.  The integer $N(C)\geq 0$ will play the part of $N$. Recall that $B$ is the bounded depth of the presentation $\mathcal P$. Let  
$$D=t^{N(C)} Q(M(C)+B+1)$$ 
Recall $Q(M)=\{\ast, t^{-1},\ldots ,t^{-M}\}$. By Lemma \ref{below2}, for each vertex $v\in C$:
$$vA\subset  t^{N(C)}At^{-N(C)}(t^{N(C)}Q(M(C)))$$ 
$$\subset t^{N(C)}At^{-N(C)}(t^{N(C)}Q(M(C)+B+1))=$$
$$t^{N(C)}At^{-N(C)}D=t^{N(C)}A\cdot Q(M(C)+B+1).$$
If $v\in C$, then $v\in t^{N(C)}A Q(M(C))$ so that $P(v)\in [N(C)-M(C),N(C)]$.  
Suppose $\alpha$ is a loop in $X-t^{N(C)}At^{-N(C)}D$. Then $\alpha$ is either in $t^{N(C)} K_0$ where $K_0$ is the special component of $X-A\cdot Q(M(C)+B+1)$ (described in part 2) of Lemma \ref{nbhd}) or $\alpha$ is in $t^{N(C)}K$ for some  component $K$ of $X-A\cdot Q(M(C)+B+1)$ other than $K_0$. If $\alpha$ belongs to $t^{N(C)}K$, then by part 1) of Lemma \ref{hup}, $\alpha$ is homotopically trivial by a homotopy $H$ such that 
$$im(P(H))\subset (-\infty, B+N(C)-(M(C)+B+1)]=(-\infty, N(C)-M(C)-1].$$ 
Since $P(C)\subset [N(C)-M(C),N(C)]$, the homotopy $H$ kills $\alpha$ in $X-C$ (actually in $X-A\cdot C$).

If $\alpha$ is in $t^{N(C)}K_0$, and $s$ is a $t^{N(C)}Dt^{-N(C)}$-bounded proper ray in $t^{N(C)}K_0$ such that $\alpha(0)=s(0)$, then by Lemma \ref{corner}, $s$ is properly homotopic (rel$\{s(0)\}$) to $r$  the proper edge path ray at $s(0)$, each of whose edges is labeled $t$, by a homotopy with image in $t^{N(C)}K_0\subset X-C$. Combining the homotopy of $r$ to $s$ with one given by part 2) of Lemma \ref{hup} (also in $t^{N(C)}K_0$) completes the proof.
\end{proof} 

\bibliographystyle{amsalpha}
\bibliography{paper}{}

Michael Mihalik

Department of Mathematics, Vanderbilt University, Nashville, TN 37240

email: michael.l.mihalik@vanderbilt.edu
  
 \end{document}